\let\nc\newcommand
\let\dmo\DeclareMathOperator
\nc\fo{\mathfrak{o}}
\nc\Q{\mathbb{Q}}
\nc\Z{\mathbb{Z}}
\nc\Ql{\Q_\ell}
\nc\Zl{\Z_\ell}
\nc\cM{\mathcal{M}}
\nc\sX{\mathcal{X}}
\nc\HM{H_\cM}
\nc\HMO{H_{\cM/\fo}}
\nc\HMf{H_{\cM,f}}
\nc\HMlf{H_{\cM,\ell-f}}
\nc\HMfp{H_{\cM,f'}}
\nc\HMnr{H_{\cM,nr}}
\nc\Fbar{\overline F}
\nc\kbar{\bar k}
\nc\cC{\mathcal{C}}
\nc\cD{\mathcal{D}}
\nc\cH{H}
\nc\La{\Ql}
\nc\cut[1]{}
\nc\Y[1]{Y_{\langle#1\rangle}}
\nc\Yb[1]{\bar Y_{\langle#1\rangle}}
\dmo{\Spec}{Spec}\dmo{\Gal}{Gal}
\dmo{\loc}{res} 
\dmo{\reg}{reg}\dmo{\gr}{gr}
\dmo{\Hom}{Hom}\dmo{\End}{End}\dmo{\im}{im}
\dmo{\Fil}{Fil}
\nc\notdivides{\not|}
\nc\kv{k_v}
\nc\G{\Gamma}
\nc\defeq{\mathop{:=}}
\nc\Ubar{\overline U}\nc\Ybar{\overline Y}
\nc\cF{\mathcal F}
\nc\AJl{AJ_\ell}
\nc\Kp[3]{K_{#1}^{\prime(#2)}#3}
\nc\xymat[2][]{
  \entrymodifiers={++!!<0pt,.8ex>}
  \vcenter{\xymatrix#1{#2}}}
\nc\nosp[1][2.5]{\hspace{-#1em}}
\nc\flr[1]{*[l]{#1\nosp}}
\nc{\har}{\ar@{{ (}->}}  
\nc{\hhar}{\ar@{{ )}->}} 
\nc\dar{\ar@{{}{--}>}}   
\nc\eqar{\ar@{=}}
\nc\dblar{\ar@<0.8ex>[r]\ar@<-0.8ex>[r]}
\nc\tplar{\ar@<1.5ex>[r]\ar[r]\ar@<-1.5ex>[r]}
\nc\mlb[1]{\displaystyle\boxed{#1}}
\newcommand{\mapright}[1]{%
  \stackrel{#1}{\longrightarrow}}
\newcommand{\isomarrow}{\overset{\sim}\longrightarrow}
\def\doublerightarrow{\mathrel{\vcenter{\hbox{$\rightarrow$}\kern-8pt
\hbox{$\rightarrow$}}}}
\def\doubleleftarrow{\mathrel{\vcenter{\hbox{$\leftarrow$}\kern-8pt
\hbox{$\leftarrow$}}}}
\def\multiplerightarrow{\mathrel{\vcenter{\hbox{$\rightarrow$}\kern-6pt
\hbox{ \vdots\hss}\kern-8pt\hbox{$\rightarrow$}}}}
\def\triplerightarrow{\mathrel{\vcenter{\hbox{$\rightarrow$}\kern-6pt
\hbox{$\rightarrow$}\kern-6pt\hbox{$\rightarrow$}}}}
\def\multipleleftarrow{\mathrel{\vcenter{\hbox{$\leftarrow$}\kern-6pt
\hbox{ \vdots\hss}\kern-8pt\hbox{$\leftarrow$}}}}
\theoremstyle{plain} 
\newtheorem{thm}[subsection]{Theorem}
\newtheorem{prop}[subsection]{Proposition}
\newtheorem{corol}[subsection]{Corollary}
\newtheorem{conj}[subsection]{Conjecture}
\theoremstyle{definition}
\title{Integral elements of $K$-theory and products of modular curves
  II}
\author{A J Scholl}
\date{}
\begin{document}
\maketitle
\begin{abstract}
  We discuss the relationship between different notions of
  ``integrality'' in motivic cohomology/K-theory which arise in the
  Beilinson and Bloch-Kato conjectures, and prove their equivalence in
  some cases for products of curves, as well as obtaining a general
  result, first proved by Jannsen (unpublished), reducing their
  equivalence to standard conjectures in arithmetic algebraic
  geometry. 
\end{abstract}

\section{Introduction}
\label{sec:intro}

This paper is a continuation of \cite{SI}. Its main aim is to give
an unconditional proof of the following comparison between two
different notions of integral motivic cohomology, which was (in the
special case $i=3$, $n=d=2$) stated (and used) without proof in
\cite[2.3.10]{SI}. (I am grateful to those who insisted to me that
this gap be filled.)

\begin{thm}
  \label{thm:main}
  Let $F$ be a number field, with ring of integers $\fo$.  Let
  $C_1,\dots, C_d$ be smooth projective curves over $F$, and let
  $M\subset h(\prod C_j)$ be a submotive of the Chow motive of their
  product. Let $0<i\le 2n-1$. Then if $n\ge d$, the integral motivic
  cohomology $\HMO^i(M,n)$ and the unramified motivic cohomology
  $\HMnr^i(M,n)$ coincide.
\end{thm}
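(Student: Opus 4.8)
The plan is to prove the non‑formal inclusion $\HMnr^i(M,n)\subseteq\HMO^i(M,n)$; the opposite inclusion is built into the definitions, since a class extending to a model over $\fo$ extends a fortiori to a model over each $\mathcal{O}_{(v)}$. I would first reduce to the case $M=h(\prod_j C_j)=:h(X)$. Fix minimal regular proper models $\mathcal{C}_j/\fo$ and put $\mathcal{X}=\mathcal{C}_1\times_{\fo}\dots\times_{\fo}\mathcal{C}_d$, a proper flat model of $X$. If $p\in\End(h(X))$ is the projector cutting out $M$, its Zariski closure $\bar p$ on $\mathcal{X}\times_{\fo}\mathcal{X}$ acts on $\HM^{\bullet}(\mathcal{X},\bullet)$ and on each $\HM^{\bullet}(\mathcal{X}_{(v)},\bullet)$, where $\mathcal{X}_{(v)}=\mathcal{X}\times_{\fo}\mathcal{O}_{(v)}$, and it restricts to $p$ on $X$. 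Since $p^2=p$ on $X$, the cycle $\bar p^2-\bar p$ meets only finitely many special fibres of $\mathcal{X}\times_{\fo}\mathcal{X}$, so $\bar p$ is idempotent, and $\mathcal{X}$ smooth, over $\fo[1/S_0]$ for a suitable finite set $S_0$ of primes (containing all places of bad reduction). One checks that $\HMO^i(X,n)$ and $\HMnr^i(X,n)$ are $\bar p$‑stable and that applying $\bar p$ recovers the corresponding groups for $M$ (the primes in $S_0$ being absorbed into the argument below), so it suffices to treat $X$.

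So let $\xi\in\HM^i(X,n)$ be unramified everywhere. By continuity of motivic cohomology there is a finite $S\supseteq S_0$ containing all places of bad reduction, together with a lift $\xi_S\in\HM^i(\mathcal{X}_S,n)$ of $\xi$, where $\mathcal{X}_S=\mathcal{X}\times_{\fo}\fo[1/S]$ is smooth over $\fo[1/S]$; and the hypothesis supplies, for each $v\in S$, a lift $\xi_{(v)}\in\HM^i(\mathcal{X}_{(v)},n)$. The aim is to fill in the primes of $S$ one at a time. For $v\in S$, the obstruction to lifting $\xi_S$ from $\HM^i(\mathcal{X}_S,n)$ to $\HM^i(\mathcal{X}_{S\setminus v},n)$ is a residue supported on the special fibre $Y_v=\prod_j Y_{j,v}$; this residue is computed in a Nisnevich neighbourhood of $Y_v$, on which $\mathcal{X}_{S\setminus v}$ and $\mathcal{X}_{(v)}$ coincide and both have generic fibre $X$, so it is also the obstruction to $\xi$ lifting to $\mathcal{X}_{(v)}$, and therefore vanishes because $\xi_{(v)}$ exists. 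Having lifted $\xi$ over $\fo[1/(S\setminus v)]$, and noting that the remaining $\xi_{(w)}$ depend only on $\xi$ over $X$, one iterates down to a lift over $\fo$, i.e.\ an element of $\HMO^i(X,n)$.

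The only place where the hypothesis $n\ge d$ is used, and the genuinely hard point, is that $\mathcal{X}$ is \emph{not} regular: it is singular along $Z_v=\prod_j(\text{non-smooth locus of }\mathcal{C}_j/\fo\text{ above }v)$, which in the semistable case — to which one reduces by a base change and descent — has codimension at least $2$. Purity therefore fails, and the residue above does not simply lie in $\HM^{i-1}(Y_v,n-1)$: the relevant group of residues along $Y_v$ carries, besides the naive residue term, correction terms coming from the strata of $Y_v$ and $Z_v$, namely motivic cohomology groups $\HM^{a}(W,b)$ of subvarieties $W\subseteq Y_v$ of dimension $\le d$, in degrees and twists dictated by resolving the singularities. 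The role of $n\ge d$, together with $0<i\le 2n-1$, is to push all of these correction groups above the cohomological amplitude of motivic cohomology for $\dim W\le d$, so that they vanish; the residue then really lies in $\HM^{i-1}(Y_v,n-1)$ and is killed as above. The same vanishing shows that $\HMO^i(X,n)$ is independent of the chosen models $\mathcal{C}_j$ and agrees with what an honest regular model would give.

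I expect the main obstacle to be precisely this local computation: describing the singularities of $\mathcal{C}_1\times_{\fo}\dots\times_{\fo}\mathcal{C}_d$ (\'etale‑locally, products of standard nodes $xy=\pi$ in two or more of the factors), resolving them, and running the attendant coniveau and weight spectral sequences to verify that $n\ge d$ annihilates every correction term in the range that matters — together with the bookkeeping needed to keep the prime‑by‑prime extension of $\xi$ compatible across all of $S$ simultaneously. Everything else is localization sequences, continuity, and formal manipulation of correspondences.
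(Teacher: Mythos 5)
Your argument breaks down at the very first substantive step, because it misreads what ``unramified'' means. In this paper $\HMnr^i(U,n)$ is defined by an $\ell$-adic condition: a class $\xi$ is unramified at $v$ iff $\loc_v(\AJl(\xi))$ lies in $H^1_{nr}(F_v,V_\ell)=H^1(\G_v,V_\ell^{I_v})$. It is \emph{not} defined by the existence of an extension of $\xi$ to a model over $\fo_{(v)}$. Consequently (a) the inclusion $\HMO\subset\HMnr$ is not ``built into the definitions'' (it needs the proper base change / weight / Hochschild--Serre argument of diagram \eqref{eq:diag1}), and (b), fatally, your key step ``the hypothesis supplies, for each $v\in S$, a lift $\xi_{(v)}\in\HM^i(\mathcal{X}_{(v)},n)$'' is exactly the statement to be proved: passing from the Galois-cohomological unramifiedness of $\AJl(\xi)$ at $v$ to an extension of $\xi$ over a regular local model is the whole content of the theorem. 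Granting yourself that lift, the rest of your argument (patching over the primes of $S$ one at a time via localization sequences) only reproduces the known reduction of the global statement to the local one, cf.\ \cite[1.3.5--6]{SI}; it proves nothing new.

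The genuinely missing ingredient is the local implication ``unramified $\Rightarrow$ integral'', and it cannot be obtained by the purely motivic residue/purity analysis you sketch. The paper's route is: for a regular proper model, $\xi$ is integral iff its boundary $\partial_{\cM}\xi$ in $\Kp{q-1}{n-d-1}Y$ of the special fibre vanishes; the compatibility \ref{prop:compat1}--\ref{prop:compat3} identifies $\tau(\partial_{\cM}\xi)$ (Riemann--Roch into $\ell$-adic homology of $\Ybar$), up to the dual specialisation map, with the ramified component of $\AJl(\xi)$; so one needs injectivity of $\tau$ on that eigenspace and surjectivity of $sp$ (local invariant cycle theorem). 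For products of curves these are supplied by the monodromy--weight conjecture (K\"unneth plus Weil II) and by Soul\'e's theorem \cite{SoK} applied to the strata of a suitable semistable-type resolution; the hypothesis $n\ge d$ enters precisely to make the relevant twist $d+1-n\le 1$, the range where Soul\'e's cycle-class and vanishing results are available --- not, as you suggest, to push ``correction terms'' of a singular product model above some cohomological amplitude (such vanishing for arbitrary motivic cohomology of varieties over finite fields is Parshin-type and conjectural in general). Also note that your erasure of the singularities issue is backwards: the paper does not work with the singular product of minimal models at bad primes at all, but (after a finite extension, using the stability of both conditions under finite extension) with Deligne's resolution of a product of semistable models, so that the strata lie in the class $\cC_k$ where Soul\'e's theorem applies. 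Without the $\ell$-adic compatibility and these finite-field inputs, your proposal does not engage the actual theorem.
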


(Of course, one expects this to hold for any Chow motive without the
condition $n\ge d$, and even the stronger statement in which $\HMnr$
is replaced by the Bloch-Kato $\HMf$-subgroup.)  We prove this using a
rather general compatibility in \'etale cohomology
(\ref{prop:compat2}), plus Soul\'e's bounds on $K$-groups of special
varieties over finite fields \cite{SoK}.

We first review the definitions of the various objects in Theorem
\ref{thm:main}. More generally, let $(F,\fo)$ be one of the following:
\begin{enumerate}
\item $F$ a number field, $\fo$ its ring of integers or a localisation
  of it;
\item $\fo$ a Henselian discrete valuation ring whose field of
  fractions $F$ has characteristic $0$, and whose residue field is
  finite.
\end{enumerate}
Let $U/F$ be a proper and smooth scheme. Then there are defined
motivic cohomology groups $\HM^i(U,n) = \HM^i(U,\Q(n))$, for integers
$i$, $n$. With rational coefficients, one has a $K$-theoretic
interpretation (or, if one prefers, definition):
\[
\HM^i(U,n) = K^{(n)}_{2n-i}U \subset K_{2n-i}(U)\otimes\Q
\]
the eigenspace on which Adams operators $\psi^q$ act as multiplication
by $q^n$.

If $U$ extends to a regular scheme $X$, proper and flat over $\fo$,
then the integral motivic cohomology is defined to be 
\[
\HMO^i(U,n) \defeq \im\left[ K_{2n-i}^{(n)}(X) \to \HM^i(U,n) \right]
\]
If $M$ is an effective Chow motive, then $X= e\cdot h(U)$ for some $U$
and some idempotent $e\in\End h(U)$. One may choose $U$ in such a way
that it has a regular proper model $X$, and the
subspaces
\[
\HM^i(M,n)=e\cdot\HM^i(U,n),\quad \HMO(M,n)=e\cdot\HMO^i(U,n)
\]
of $\HM^i(U,n)$ depend functorially only on $M$ (this is the main
result of \cite[\S1]{SI}). The integral motivic cohomology groups
$\HMO^*(M,*)$ feature in Beilinson's conjectures on special values of
$L$-functions \cite{Be1,Be2,RSS}.

There is defined an $\ell$-adic regulator map, with values in continuous
$\ell$-adic cohomology \cite{JC}
\[
\reg_\ell\colon \HM^i(U,n) \to H^i(U,\Ql(n)).
\]
If $i\ne 2n$ then one knows that the composite 
\[
\HM^i(U,n) \to H^i(U,\Ql(n))  \to H^i(U\otimes\Fbar,\Ql(n))
\]
is zero, so that the Hochschild-Serre spectral sequence in continuous
$\ell$-adic cohomology induces a homomorphism, the  $\ell$-adic
Abel-Jacobi map
\[
\AJl \colon \HM^i(U,n) \to H^1(F,V_\ell)).
\]
Here we have written $V_\ell = H^{i-1}(U\otimes\Fbar,\Ql(n))$ for the
$\ell$-adic cohomology of the geometric fibre. Let $v$ be a prime of
$F$ not dividing $\ell$, with residue field $k_v$, and $F_v$ the
completion of $F$ at $v$. Let $G_v=\Gal(\Fbar_v/F_v)$,
$I_v=\Gal(\Fbar_v/F_v^{nr})$ the inertia group, and
$\G_v=\Gal(\kbar_v/k_v)=G_v/I_v$.\footnote{%
  In case (ii), we mean that $v$ is the canonical place of $F$, so 
  $G_v=G$, and that the residue characteristic of $F$ is different
  from $\ell$.} %
Recall the exact sequence of ramified and unramified cohomology
\[
\label{eq:unram}
\xymat{
  0 \ar[r] & H^1(\G_v, V_\ell^{I_v}) \ar[r] \eqar[d]
  & H^1(G_v,V_\ell) \ar[r] & H^1(I_v,V_\ell)^{\G_v} \ar[r]
  \eqar[d] & 0
  \\
  & H^1_{nr}(F_v,V_\ell) & & H^1_{ram}(F_v,V_\ell)
}
\]
Let $\loc_v\colon H^1(F,V_\ell) \to H^1(F_v,V_\ell)$ be the
restriction map.
Bloch and Kato \cite{BK} define a subspace $H^1_f(F_v,V_\ell)$ which
coincides with $H^1_{nr}(F_v,V_\ell)$ if $\ell\ne p_v$, and use this
to define a subspace of motivic cohomology by
\begin{equation}
  \label{eq:hmf}
  \HMf^i(X,n) = \bigcap_{v,\ell}(\loc_v\circ \AJl)^{-1}(
  H^1_f(F_v,V_\ell)) 
\end{equation}
--- in the notation of Bloch-Kato and Fontaine--Perrin-Riou, $V_\ell$ is the realisation of the motive
$V=h^{i-1}(U)(n)$, and  they write $\HMf^1(V)$ for the
group \eqref{eq:hmf}.  Implicit in Bloch-Kato's generalisation of the
Beilinson conjectures is part (i) of the following conjecture (and see
already \cite[4.0.(b)]{BeH} for the case $\ell\ne p_v$) --- part (ii) is
folklore:

\begin{conj}
  (i) $\HMf^i(U,n)=\HMO^i(U,n)$.

  (ii) for fixed $v$ the subspace
  \[
  \ker\left[ \HM^i(U,n) \to H^1(F_v,V_\ell)/H^1_f(F_v,V_\ell) \right]
  \]
  is independent of $\ell$.
\end{conj}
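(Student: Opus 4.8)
The full conjecture should not be expected unconditionally; the plan is to reduce it to standard conjectures in arithmetic algebraic geometry (finiteness of motivic cohomology; rational injectivity of $\ell$-adic regulators for varieties over finite fields), thereby recovering Jannsen's unpublished argument, and then to observe that for a submotive of a product of curves with $n\ge d$ these inputs are furnished by Soulé's bounds \cite{SoK} --- which is how Theorem \ref{thm:main} is obtained unconditionally. The engine throughout is the compatibility of the $\ell$-adic regulator with localization sequences on a regular model, namely Proposition \ref{prop:compat2}.

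The easy half, $\HMO^i(U,n)\subseteq\HMf^i(U,n)$, is essentially the principle that geometric classes satisfy the Bloch--Kato condition. If $\xi$ extends to $\tilde\xi\in K_{2n-i}^{(n)}(X)$ on a global regular proper model $X/\fo$, then for each place $v$ the class $\reg_\ell(\tilde\xi)$ lies in the image of the continuous cohomology of the model over the localization $\fo_{(v)}$; for $v\nmid\ell$, smooth and proper base change identifies this image, via Hochschild--Serre, with $H^1_{nr}(F_v,V_\ell)=H^1_f(F_v,V_\ell)$, and for $v\mid\ell$ the comparison with (log-)crystalline cohomology does the same. Hence $\loc_v\circ\AJl(\xi)\in H^1_f(F_v,V_\ell)$ for all $v$ and $\ell$, so $\xi\in\HMf^i(U,n)$.

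For the reverse inclusion I would prove $\HMf^i(U,n)\subseteq\HMnr^i(U,n)\subseteq\HMO^i(U,n)$. \emph{Step A} ($\HMf\subseteq\HMnr$): fix $v$ and, when $v\nmid\ell$, a prime $\ell\ne p_v$, so that the Bloch--Kato condition at $(v,\ell)$ reads $\loc_v\AJl(\xi)\in H^1_{nr}(F_v,V_\ell)$. The obstruction to spreading $\xi$ out to a regular model over $\fo_{(v)}$ lives, by the localization sequence in motivic cohomology (and purity), in a motivic cohomology group attached to the special fibre over $v$; applying $\reg_\ell$ and Proposition \ref{prop:compat2} sends it to the $\ell$-adic obstruction, which vanishes exactly because $\loc_v\AJl(\xi)$ is unramified, so one is reduced to rational injectivity of the $\ell$-adic regulator on the relevant $K$-group of that finite-field variety --- a standard conjecture, and a theorem of Soulé in the range relevant to products of curves (whence the hypothesis $n\ge d$). \emph{Step B} ($\HMnr\subseteq\HMO$) is the hard half of Theorem \ref{thm:main}: spreading out already holds over $\Spec\fo$ away from a finite set $S$ and over each $\fo_{(v)}$ with $v\in S$, and the obstruction to assembling these into a single global regular model is controlled by the motivic cohomology of the fibres over $S$, whose finiteness is again a standard conjecture (and known via Soulé for products of curves with $n\ge d$). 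Together these give $\HMf=\HMnr=\HMO$, proving (i); and (ii) follows, since by (i) applied at the single place $v$ the condition $\loc_v\AJl(\xi)\in H^1_f(F_v,V_\ell)$ becomes equivalent to $\xi$ extending over a regular model over $\fo_{(v)}$, a condition making no reference to $\ell$.

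The main obstacle is the case $v\mid\ell$. Here the prime $\ell=p_v$ is forced, and identifying $H^1_f(F_v,V_\ell)$ --- defined via Fontaine's rings $B_{\mathrm{cris}}$ and $B_{\mathrm{st}}$ --- with the classes that spread out over a regular or semistable model depends on $p$-adic comparison theorems and on matching $p$-adic regulators with syntomic cohomology classes; the $p$-adic analogue of Proposition \ref{prop:compat2} that one needs is, in this generality, only available conditionally. It is exactly this point that Theorem \ref{thm:main} sidesteps: $\HMnr$ is defined by a geometric spreading-out condition rather than any Galois-cohomological one, so no $p$-adic Hodge theory is required.
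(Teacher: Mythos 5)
This statement is labelled a conjecture in the paper, and the paper offers no proof of it: part (i) is attributed to Bloch--Kato/Beilinson and part (ii) is called folklore, and the unconditional result actually proved (Theorem \ref{thm:main}) is strictly weaker --- it replaces $\HMf$ by $\HMnr$, i.e.\ it drops all conditions at places $v\mid\ell$, and it is restricted to submotives of products of curves with $n\ge d$. Your proposal, which concedes at the outset that the statement ``should not be expected unconditionally'', is therefore not a proof of the statement at all; at best it is a sketch of the reduction the paper itself carries out for the weaker $\HMnr$-statement. The decisive gap is exactly the $v\mid\ell$ case: showing that a class satisfying the Bloch--Kato condition $H^1_f(F_v,V_\ell)$ at $p$-adic places extends to a regular model (and even the ``easy'' inclusion $\HMO\subseteq\HMf$ at such places, which the paper only asserts ``under suitable hypotheses'', citing Nekov\'a\v{r} and Nizio{\l}) is left unresolved in your last paragraph, so the claimed chain $\HMf=\HMnr=\HMO$ is not established, and Theorem \ref{thm:main} cannot be upgraded to the conjecture even for products of curves.

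There are also inaccuracies in the conditional part of your sketch. Your ``Step A'' ($\HMf\subseteq\HMnr$) is immediate from the definitions, since for $\ell\ne p_v$ one has $H^1_f(F_v,V_\ell)=H^1_{nr}(F_v,V_\ell)$ and $\HMf$ imposes a superset of the conditions defining $\HMnr$; the argument you place there (obstruction on the special fibre, Proposition \ref{prop:compat2}, injectivity of the regulator on $K'$-theory of the special fibre) is really the argument for $\HMnr\subseteq\HMO$, and as the paper's Corollary \ref{cor:uwe} makes precise it needs \emph{two} inputs, not one: injectivity of the Riemann--Roch transformation $\tau$ (Jannsen's generalised Tate conjecture, supplied by Soul\'e's theorem in the relevant range) \emph{and} surjectivity of the specialisation map, i.e.\ the local invariant cycle theorem, which the paper derives from the monodromy--weight conjecture and verifies for products of curves (Proposition \ref{prop:mwcurves}, via Rapoport--Zink and K\"unneth). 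Your sketch omits this second ingredient entirely. Your ``Step B'' misplaces the difficulty: the globalisation is handled by the standard localisation sequences recalled in \S\ref{sec:prelim} once a global regular proper model exists, and needs no extra finiteness conjecture; the hard content is local. Finally, your deduction of (ii) from (i) uses a place-by-place version of (i) that is not what is stated, though that is a minor point compared with the fact that nothing here is unconditional.
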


Let us from now on ignore the places $v$ dividing $\ell$ (which, to be
sure, are the most interesting ones) and define
\[
\HMnr^i(U,n) = \bigcap_{v,\ell\ne p_v}(\loc_v\circ \reg_\ell)^{-1}(
  H^1_{nr}(F_v,V_\ell)) 
\]
The ring $\End h(U)$ of correspondences on $U$ (for rational
equivalence) acts on everything in sight and so for a submotive
$M\subset h(U)$ the groups $\HMf^i(M,n)^0 \subset \HMnr^i(M,n) \subset
\HM^i(M,n)$ are defined. 

It is well known that one has $\HMO\subset \HMnr$ (we recall the proof
in the next section) and even that $\HMO\subset \HMf$ under suitable
hypotheses\dots  (Similar statements hold for $\ell= char(k)$, see for
example \cite{Nek,Niz}).

Jannsen showed (unpublished) that the equality of $\HMO$ and $\HMnr$
would follow from two standard conjectures: the monodromy-weight
conjecture on the action of inertia on $\ell$-adic cohomology, and his
generalisation of the Tate conjecture on algebraic cycles to arbitrary
varieties over finite fields. See \ref{cor:uwe} below. After reviewing
some of what is known in the next section, will prove a rather general
compatibility in $\ell$-adic cohomology, from which Jannsen's result
will be a corollary.

For historical reasons I have kept to the old definition of motivic
cohomology using $K$-theory, rather than higher Chow groups. It should
not be hard to rewrite everything here in terms of higher Chow groups,
using the localisation techniques of Levine \cite{Lev}. However there
are no new phenomena to be expected when working with
$\Z$-coefficients, if only because, for a $\Zl$-representation $T$ of
$\Gal(\Fbar/F)$ (for $F$ local or global) Bloch and Kato define
$H^1_f(F,T)$ to be simply the preimage, via the natural map
$H^1(F,T)\to H^1(F,T\otimes\Ql)$ of the subspace
$H^1_f(F,T\otimes\Ql)\subset H^1(F,T\otimes\Ql)$. Moreover, the
integral groups ``without denominators'' are only meaningful in the
presence of a regular model $X$ of $U$, not just a regular alteration.

\section{Preliminaries}
\label{sec:prelim}

For completeness, let us first recall what happens when $i=2n$. In
this case, the localisation sequence of $K$-theory shows that
$\HMO^{2n}(U,n)$ and $\HM^{2n}(U,n)$ are equal; this group is
$CH^n(U)\otimes\Q$, the Chow group of codimension $n$ cycles on $U$.
In this case the cycle class map $\HM^{2n}(U,n)\to
H^{2n}(\Ubar,\Ql(n))$ is non-zero, and its kernel is
$\HM^{2n}(U,n)^0\defeq CH^n(U)^0\otimes\Q$, the subgroup of cycles
homologically equivalent to zero. The Abel-Jacobi homomorphism is a
map from $\HM^{2n}(U,n)^0$ to $H^1(F,V_\ell)$, and the obstruction to
the equality $\HMnr^{2n}(U,n)^0=\HM^{2n}(U,n)^0$ lies in the ramified
cohomology groups
\begin{equation}
  \label{eq:obs-chow}
  H^1(I_v, V_\ell)^{\G_v}
  =\Hom_{\G_v}(\Ql(1-n), H^{2n-1}(\Ubar,\Ql(n-1))_{I_v}).
\end{equation}
The monodromy-weight conjecture (recalled as \ref{conj:mwc} below)
implies that the $I_v$-coinvariants of $H^{2n-1}(\Ubar,\Ql)$ have
weights $\ge 2n-1$, and therefore that the obstruction group
\eqref{eq:obs-chow} vanishes. In other words, $\HMnr^{2n}(U,n)^0
\subset \HMO^{2n}(U,n)^0 = \HM^{2n}(U,n)^0$, with equality if the
monodromy-weight conjecture holds.

Since $\HM^i(U,n)\subset K_{2n-i}U\otimes\Q$ vanishes for $i>2n$, we
assume henceforth that $q\defeq 2n-i>0$.

For the moment suppose that we are in setting (i).  Write $\fo_{(v)}$
for the localisation of $\fo$ at $v$, $\fo_v$ for its completion, and
$\kv$ for its residue field. Assume that $U$ has a regular and proper
model $X$ over $\fo$.  Then from the localisation sequences
\begin{gather*}
  K_qX \to K_qU \to \coprod_v K'_{q-1}X\otimes\kv
  \\
  K_qX\otimes\fo_{(v)} \to K_qU \to K'_{q-1}X\otimes\kv
  \\
  K_qX\otimes\fo_v \to K_qU\otimes F_v \to K'_{q-1}X\otimes\kv
\end{gather*}
we see that 
\[
\HMO^i(U,n)=\ker\left [ \HM^i(U,n) \to \coprod_v \frac{\HM^i(U\otimes
    F_v,n)}{\HMO^i(U\otimes F_v,n) } \right ]
\]
(cf. \cite[1.3.5--6]{SI}). Since by definition the corresponding
identity holds for $\HMnr$, the comparison between $\HMO$ and $\HMnr$ is
reduced to the local case.

We also recall that both the integrality and the unramified conditions
are stable under finite extensions $F'/F$: under the inclusion 
$\HM^i(U,n)\subset \HM^i(U\otimes F',n)$ one has
\begin{align*}
  \HMO^i(U,n)&= \HM^i(U,n) \cap \HMO^i(U\otimes F',n)
  \\
  \HMnr^i(U,n)&= \HM^i(U,n) \cap \HMnr^i(U\otimes F',n)
\end{align*}
which for $\HMnr$ is clear from the definition, and for $\HMO$ follows
from \cite[\S1]{SI}.

For the rest of the paper we will assume that we are in the local
case (ii): thus $F$ is local, with valuation ring $\fo$ and finite
residue field $k$, and write $S=\Spec\fo=\{\eta,s\}$ as usual.
Let $f\colon X\to S$ be proper and flat, with special fibre
$g\colon Y=X_s \to \Spec k$ and generic fibre $U=X\setminus Y=X_\eta$.
Let $d=\dim U$, and write $G=\Gal(\Fbar/F)$, $I$ for the inertia
subgroup of $G$ and $\G=\Gal(\kbar/k)=G/I$.

We consider the analogue of $\AJl$ on $X$ itself. By the proper
base-change theorem 
\[
H^0(S,R^i f_*\Ql(n))
= H^0(s,R^i g_*\Ql(n)) = H^i(\Ybar,\Ql(n))^\G=0
\]
since by Deligne \cite{DeW2}, the weights of $H^i(\Ybar,\Ql(n)$ are $\le (i-2n)$,
hence nonzero. So from the Hochschild-Serre spectral sequence we
obtain an edge homomorphism
\[
e_1\colon H^i(X,\Ql(n)) \to  
H^1(S,\cF) \qquad\text{where }\cF = R^{i-1} f_*\Ql(n).
\]
Composing with the Chern character $ch\colon K_qX \to H^i(X,\Ql(n))$,
we obtain a commutative diagram, in which the bottom row is exact:
\begin{equation}
  \label{eq:diag1}
  \xymat{
    & K_qX \ar[d]_{e_1\circ ch} \ar[r] & K_qU \ar[d]^{\AJl} \\
    & H^1(S,\cF) \ar[d] \ar[r] & H^1(\eta,\cF_\eta) \eqar[ddd] \\
    & H^1(S,j_*\cF_\eta) \eqar[d] \\
    & H^1(s,i^*j_*\cF_\eta) \eqar[d] \\
    0 \ar[r] & H^1(\G,\cF_{\bar\eta}^I) \ar[r] & H^1(G,\cF_{\bar\eta})
    \ar[r] & H^1(I,\cF_{\bar\eta})^\G
  }
\end{equation}
This shows that $\HMO^i(X,n)\subset
\HMlf^i(X,n)$ whenever $\ell\ne char(k)$, as mentioned in the
introduction. 

We next review when the obstruction group $H^1(I,\cF_{\bar\eta})^\G$ can be
non-zero. First recall:

\begin{conj}[Monodromy-weight conjecture]
  \label{conj:mwc}
  Let $W_\bullet$ denote the weight filtration on $H^j(\Ubar,\Ql)$,
  and let $N\colon H^j(\Ubar,\Ql) \to H^j(\Ubar,\Ql)(-1)$ denote the
  ``logarithm of monodromy'' operator. Then for each $r\ge0$,
  $N^r$ induces an isomorphism
  \[
  \bar N^r \colon gr^W_{j+r}H^j(\Ubar,\Ql) \isomarrow
  gr^W_{j-r}H^j(\Ubar,\Ql)(-r).
  \]
\end{conj}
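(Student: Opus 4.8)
This is Deligne's celebrated monodromy-weight conjecture, which we shall not prove; the purpose of this remark is to outline the standard line of attack and to indicate precisely where it stalls in the mixed-characteristic situation at hand. (The analogous statement for the limit mixed Hodge structure is a theorem of Schmid and Steenbrink, which is one of the reasons for expecting it.) The plan begins with a reduction to the case of semistable reduction. By de Jong's alteration theorem there is a finite extension $F'/F$, a regular scheme $X'$ proper and strictly semistable over the ring of integers $\fo'$ of $F'$, and a proper dominant generically finite morphism from the generic fibre of $X'$ to $U\otimes_F F'$. Since the assertion is insensitive to replacing $F$ by a finite extension, and since $H^j(\Ubar,\Ql)$ is cut out of the corresponding cohomology of $X'$ as a direct summand --- by the trace of the alteration divided by its degree --- compatibly with both the weight filtration $W_\bullet$ and with $N$, one is reduced to the case in which $X/S$ is itself strictly semistable.

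In that case the plan is to feed $X$ into the weight spectral sequence of Rapoport and Zink. Writing $Y=\bigcup_i Y_i$ for the special fibre and $Y^{(m)}$ for the disjoint union of the $m$-fold intersections of its components, this spectral sequence has $E_1$-page built from the groups $H^\bullet(\Ybar^{(m)},\Ql)$ with various Tate twists, degenerates at $E_2$, induces on $H^j(\Ubar,\Ql)$ the weight filtration $W_\bullet$, and carries an action of $N$ given by an explicit shift (essentially an identity map between adjacent columns). Tracing the desired isomorphism $\bar N^r$ back through this picture, one finds that the conjecture is equivalent to a single assertion at the level of $E_2$: that a certain morphism of complexes, assembled from iterated cup-products with the cohomology classes of the components of $\Ybar$, is a quasi-isomorphism in the appropriate range --- a kind of ``hard Lefschetz theorem for the weight spectral sequence''.

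The hard part will be precisely this last assertion. In equal characteristic $p$ it can be extracted from Deligne's Weil~II \cite{DeW2}: one spreads the family out over a smooth curve over a finite field, so that the degree-$j$ cohomology becomes the specialisation at a boundary point of a lisse sheaf pure of weight $j$, whose local monodromy filtration then coincides with the weight filtration by purity --- and so in that setting the conjecture is a theorem. In mixed characteristic, although each $H^\bullet(\Ybar^{(m)},\Ql)$ is the cohomology of a smooth projective variety over the finite residue field and therefore does satisfy hard Lefschetz (Deligne), the isomorphism one needs is not the hard-Lefschetz isomorphism for any single $\Ybar^{(m)}$; it intertwines the various strata through the differentials of the spectral sequence, and no argument is known that controls $E_2$ at this level of generality. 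I expect this to be the decisive obstacle: it is why the conjecture is still open in mixed characteristic, and it is the reason that in Theorem~\ref{thm:main} one must impose $n\ge d$ and substitute Soul\'e's vanishing theorem for the $K$-groups of special varieties over finite fields \cite{SoK} for this monodromy-weight input. (Unconditionally the conjecture is known in mixed characteristic for varieties with good reduction, for abelian varieties, for surfaces, and for $p$-adically uniformised varieties --- none of which we shall require.)
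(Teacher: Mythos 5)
The statement you were asked about is a conjecture: the paper states \ref{conj:mwc} without proof, so there is no proof of the general statement to compare against, and your decision to decline a proof and merely survey the standard strategy is reasonable. Your outline (reduction to strict semistable reduction by de Jong's alterations, the Rapoport--Zink weight spectral sequence with its $E_2$-degeneration and the action of $N$ as a shift, the reformulation as a hard-Lefschetz-type statement that mixes the strata, Deligne's Weil~II argument in equal characteristic, and the known mixed-characteristic cases) is broadly accurate. Note, though, that what the paper actually needs and proves is only the special case of products of curves, and it does so by a much shorter mechanism than anything in your outline: Proposition \ref{prop:mwcurves} deduces it from the stability of the monodromy-weight conjecture under products (K\"unneth together with \cite[(1.6.9)]{DeW2}) and the Rapoport--Zink theorem in dimension at most two \cite{RZ}.

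One substantive assertion in your last paragraph is wrong and should be corrected: the hypothesis $n\ge d$ in Theorem \ref{thm:main} is \emph{not} imposed because the monodromy-weight input is missing, and Soul\'e's theorem is not a substitute for it. For products of curves the monodromy-weight conjecture (hence the local invariant cycle statement \ref{conj:ict}) is available unconditionally, as just explained. The restriction $n\ge d$ comes from the \emph{other} hypothesis of Corollary \ref{cor:uwe}, namely the injectivity of the Riemann--Roch transformation $\tau$ on the special fibre, which in general would follow from Jannsen's generalisation of the Tate conjecture (\ref{conj:gentate}); unconditionally the paper can verify it, via Soul\'e's theorem \cite{SoK}, only for the homology of the special fibre with twist $m\le 1$ (Proposition \ref{prop:parshin-curves}), and since the relevant twist is $d-n+1$, the condition $m\le 1$ is exactly $n\ge d$. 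So Soul\'e's vanishing replaces the Tate/Parshin-type input, while the monodromy-weight input is genuinely used and is a theorem in the case at hand.
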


Assume that $X$ is regular, and that $Y$ is a reduced strict normal
crossings divisor in $X$. Then the weight spectral sequence of
Rapoport-Zink \cite[]{RZ} controls the weights of $H^j(\Ubar,\Ql)$;
let $h=h(X)$ be the least positive integer such that no set of more
than $h$ components of $Y$ has non-trivial intersection. Then
\[
\gr^W_w H^j(\Ubar,\Ql) \ne 0\ \Rightarrow\ 
\max\{0,j-h, 2d-j\} \le w \le \min\{2j, j+h, 2d\}.
\]
In general we may replace $U$ by an alteration $U'$ for which such a
model $U'\subset X'$ exists, and take $h=h(X')$.

Therefore if $H^1(I,\cF_{\bar\eta})^\G =
\Hom_{\G}(\Ql(1-n),H^{i-1}(\Ubar,\Ql)_I)$ is non-zero, the pair
$(i,n)$ must satisfy the inequalities
\[
n\le d+1,\quad n \le i \le 2d \quad \text{and}\quad i\ge 2n-h-1.
\]
We also have the obvious inequality $i\le 2n$.  So far we have not
used the monodromy-weight conjecture; if we assume it, then the
weights of $H^j(\Ubar,\Ql)_I$ are all $\ge j$, whence we have an
additional inequality $i\le 2n-1$, which just excludes the case $i=2n$
already considered at the beginning of this section.

For $U$ a product of curves, Theorem \ref{thm:main} therefore shows
that:
\begin{itemize}
\item  in the region $n>d+1$, one has $\HMO^i(U,n)=\HM^i(U,n)$ (for this
  the compatibility \ref{prop:compat3} is not needed, only the
  computations on the special fibre at the end of this section); and
\item along the lines $n=d$ and $n=d+1$ the integrality conditions
  (which are in general non-trivial) coincide.
\end{itemize}
Notice also that over a number field one expects $\HM^i(U,n)=0$ as
soon as $i>2d+1$.

\begin{center}
\includegraphics[scale=0.3,viewport=-50 0 1100 650,clip]{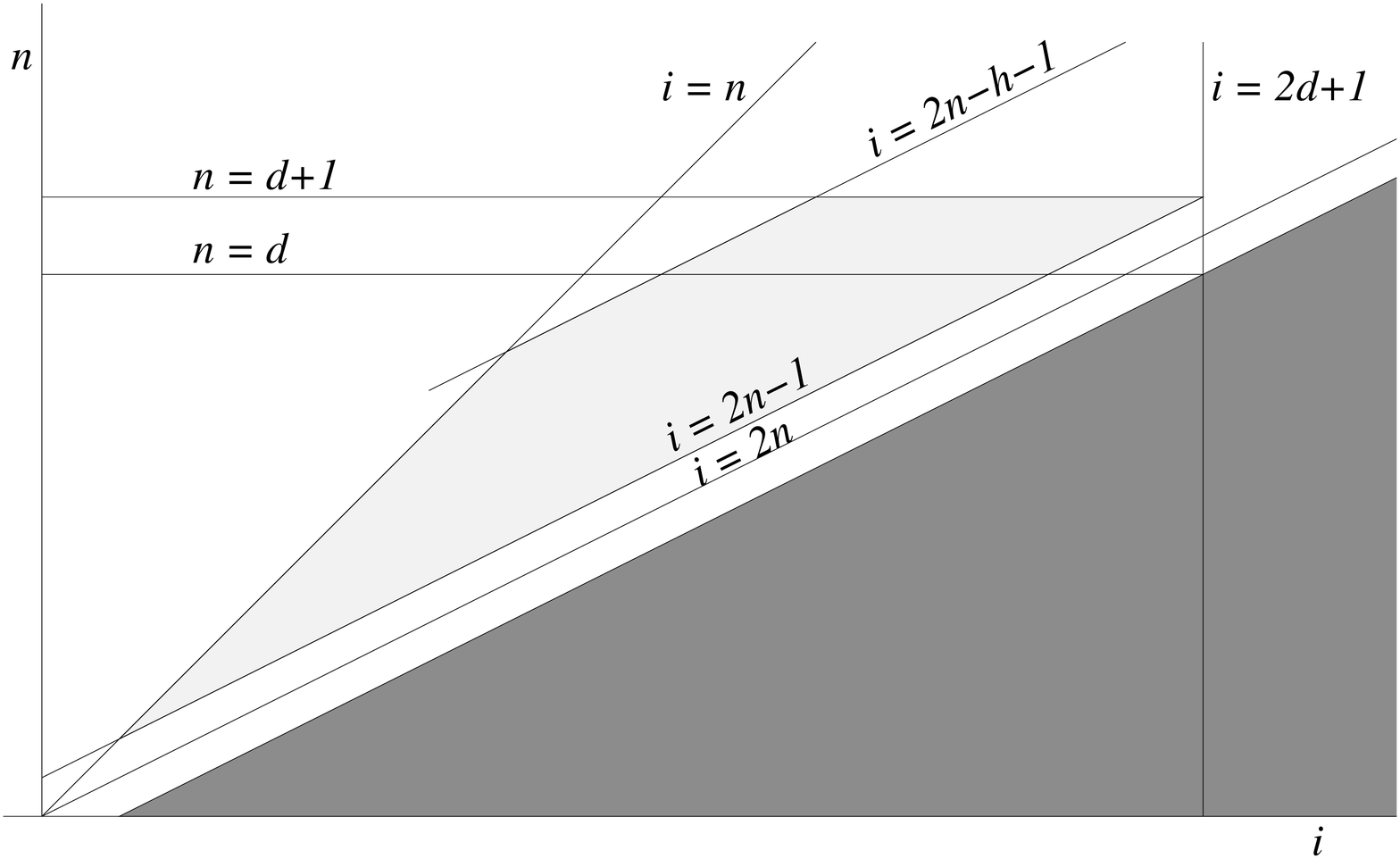}
\\
\mbox{ }
\end{center}

To go further we want to enlarge the diagram \eqref{eq:diag1} to
\begin{equation}
  \label{eq:diag2}
  \xymat{
    & K_qX \ar[r] \ar[d] & K_qU \ar[r]^{\alpha} \ar[d] \ar[dr]^{\beta}
    & K'_{q-1}Y
    \dar[d]^\phi \\
    0 \ar[r] & H^1(\G,\cF_{\bar\eta}^I) \ar[r] &
    H^1(G,\cF_{\bar\eta}) \ar[r] & H^1(I,\cF_{\bar\eta})^\G
    \ar[r] & 0
  }
\end{equation}
for a suitable vertical map $\phi$, where the top row is the
localisation sequence in $K'$-theory, so as to compare the kernels of
$\alpha$ and $\beta$. We recall (see \S3) that under the boundary map
$\partial$, the subspace $K_q^{(n)}U \subset K_qU\otimes\Q$ maps into
the subspace $\Kp{q-1}{n-d-1}Y\subset K'_{q-1}Y\otimes\Q$, and that
the Riemann-Roch transformation $\tau$ maps $\Kp{q-1}{n-d-1}Y$ to the
space of $\G$-invariants of the $\ell$-adic homology group
\begin{align*}
  H_{2d-i+1}(\Ybar,\Ql(d-n+1)) &= H^{i-2d-1}(\Ybar,Rf_s^!\Ql(n-d-1)) \\
  &\simeq H^{2d-i+1}(\Ybar,\Ql(d-n+1))^\vee
\end{align*}
(the isomorphism being given by Grothendieck-Verdier duality). In the
bottom row, we have
\[
H^1(I,\cF_{\bar\eta})=H^{i-1}(\Ubar,\Ql(n-1))_I
\simeq \left[ H^{2d-i+1}(\Ubar,\Ql(d-n+1))^I\right]^\vee
\]
by Poincar\'e duality. Finally we have the specialisation map
\[
sp\colon H^{2d-i+1}(\Ybar,\Ql) \to H^{2d-i+1}(\Ubar,\Ql)^I
\]
and we can therefore formulate the desired compatibility as:
\begin{prop}
  \label{prop:compat1}
  The following diagram is commutative up to sign:
  \[
  \xymat{
    K_q^{(n)}U \ar[r] \ar[dd]_{\AJl} & {}\Kp{q-1}{n-d-1}Y \ar[d]^\tau \\
    & H^{i-2d-1}(\Ybar,Rf_s^!\Ql(n-d-1))\cut{^\G} \ar[d]^{\simeq} \\
    H^1(G,H^{i-1}(\Ubar,\Ql(n))) \ar[d] 
    & {}\cut{\left[} H^{2d-i+1}(\Ybar,\Ql(d-n+1))^\vee \cut{\right]^\G} \\
    H^1(I,H^{i-1}(\Ubar,\Ql(n)))^\G 
    \cut{\ar[r]^{=} } \har[r]
    & {}\cut{\left[} (H^{2d-i+1}(\Ubar,\Ql(d-n+1))^\vee)_I \cut{\right]^\G}
    \ar[u]_{sp^\vee}
  }
  \]
\end{prop}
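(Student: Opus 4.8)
The plan is to establish Proposition \ref{prop:compat1} by unwinding all the identifications into a single commutative diagram of ``geometric'' origin, so that the compatibility becomes a statement purely about localisation in $K$-theory, the Riemann--Roch/Chern character transformation, and the functoriality of the Hochschild--Serre edge maps --- none of which individually involves arithmetic choices. Concretely, I would first record that the $\ell$-adic Abel--Jacobi map $\AJl$ on $K_q^{(n)}U$ is, by construction (via \eqref{eq:diag1} and the Hochschild--Serre spectral sequence for $U\to\eta$), the edge homomorphism applied to $ch(x)\in H^i(U,\Ql(n))$; and that the boundary $\partial\colon K_q^{(n)}U\to\Kp{q-1}{n-d-1}Y$ followed by $\tau$ is, by the Riemann--Roch theorem with supports (Gillet, \cite{JC}), compatible with the Gysin/boundary map $H^i(U,\Ql(n))\to H^{i-2d-1}(\Ybar,Rf_s^!\Ql(n-d-1))$ in continuous $\ell$-adic cohomology. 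Thus the whole diagram reduces to the commutativity of a purely cohomological square relating: (a) the localisation triangle $j_!j^*\to\mathrm{id}\to i_*i^*$ on $X$ (giving the boundary to $H^*(\Ybar, Rf_s^!(\cdot))$), and (b) the Hochschild--Serre filtration on $H^i(X,\Ql(n))$ and its restriction to the generic fibre.

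Second, I would make the duality identifications explicit and check they are the ``right'' ones. On the generic side: $H^1(I,\cF_{\bar\eta})=H^{i-1}(\Ubar,\Ql(n-1))_I$, and Poincar\'e duality for $U/\Fbar$ identifies this with the dual of $H^{2d-i+1}(\Ubar,\Ql(d-n+1))^I$ --- with the $I$-coinvariants on one side matching $I$-invariants on the dual, which is exactly the (automatic) exactness making the cup-product pairing perfect on the relevant graded pieces. On the special side: Grothendieck--Verdier duality $Rf_s^!\Ql\simeq \Ql(d)[2d]$ is \emph{not} available ($Y$ is singular/non-proper over $S$), so one uses instead the absolute duality $H^{i-2d-1}(\Ybar,Rf_s^!\Ql(n-d-1))\simeq H^{2d-i+1}(\Ybar,\Ql(d-n+1))^\vee$ coming from $Rf_s^!\Ql \simeq \mathbb{D}_{Y/k}\otim\Ql(-d)[-2d]$ composed with Poincar\'e--Verdier duality over $k$. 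The map labelled $sp^\vee$ is then the transpose of the specialisation $sp\colon H^{2d-i+1}(\Ybar,\Ql)\to H^{2d-i+1}(\Ubar,\Ql)^I$, and one must verify that \emph{this} is the map fitting into the bottom edge of \eqref{eq:diag2} --- i.e. that the dual of specialisation corresponds, under the two Poincar\'e dualities, to the natural surjection $H^{i-1}(\Ubar,\Ql(n-1))_I\twoheadleftarrow H^{i-1}(\Ybar,\Ql(n-1))$ twisted appropriately, which in turn is the geometric incarnation of the localisation boundary. This is a matter of tracking the compatibility of specialisation with Poincar\'e duality (the ``Gysin = transpose of restriction'' principle), which is standard but must be stated carefully with the weight/twist bookkeeping.

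Third, I would assemble these pieces: the outer square of the proposition factors as (i) [$\AJl$ = edge map of HS for $U$] $\circ$ [$ch$], glued to (ii) [boundary in $\ell$-adic cohomology along $Y$] $\circ$ [$ch$] via the compatibility of $ch$ with localisation (Gillet's Riemann--Roch with supports), glued to (iii) the commutativity of edge maps with the localisation triangle for the pair $(X,Y)$ over $S$. Step (iii) is where one invokes that $R^jf_*\Ql$ has no $\G$-invariants in the relevant range --- already used to produce $e_1$ in \eqref{eq:diag1} --- so that the $E_2$-terms of the Hochschild--Serre/Leray spectral sequences line up and the boundary maps land where claimed. The sign ambiguity (``up to sign'') is exactly the usual indeterminacy in comparing the two Poincar\'e dualities and the connecting maps of distinguished triangles, and I would not attempt to pin it down.

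The main obstacle I anticipate is step two: getting the duality identifications to be genuinely compatible, i.e. verifying that the composite of Poincar\'e duality on $\Ubar$, the specialisation map, and Poincar\'e--Verdier duality on $\Ybar$ really does reproduce the localisation boundary $H^1(G,\cF_{\bar\eta})\to H^1(I,\cF_{\bar\eta})^\G$ after passing to $\G$-(co)invariants and Hochschild--Serre. The subtlety is that $H_*(\Ybar,\Ql)$ (Borel--Moore homology of the possibly-reducible special fibre) is dual to ordinary cohomology $H^*(\Ybar,\Ql)$ only through Poincar\'e--Verdier duality \emph{on $Y$ over $k$}, which mixes the components' intersection combinatorics in exactly the way the weight spectral sequence does; so one has to be sure the map $sp^\vee$ is the transpose of $sp$ \emph{as maps of $\G$-modules with the stated Tate twists}, and that no spurious Frobenius twist is introduced by the duality between $Rf_s^!$ and $\mathbb{D}$. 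I would handle this by reducing, via proper base change and the projection formula, to the case where everything is expressed through $Rf_*$ and $Rf_!$ on $S$, where the localisation triangle $i_*Ri^!\to\mathrm{id}\to Rj_*j^*$ on $S$ makes the comparison transparent; the remaining check is then the standard compatibility of the trace map with localisation, for which I would cite SGA~4.
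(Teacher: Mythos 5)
Your overall route is the same as the paper's: use Gillet's Riemann--Roch transformation and the compatibility of the $K'$-theoretic and $\ell$-adic boundary maps (the square \eqref{eq:compat-bdy}, cf.\ \cite[end of \S 8.1]{JMM}) to eliminate $K$-theory, identify the dual of $H^*(\Ybar,\Ql)$ via Verdier duality over $k$ (your remark that $Rf_s^!\Ql\ne\Ql(d)[2d]$ on the singular fibre is correct and is how the paper sets things up), and then push everything down to the trait, where the statement becomes one about $K=Rf_*Rf^!\Ql$ and $L=Rf_!\Ql$ with their pairing into $\Ql(1)$.

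The gap is in your final step, which you yourself single out as the main obstacle and then dispose of with the phrase that on $S$ the comparison is ``transparent'' and reduces to ``the standard compatibility of the trace map with localisation'' in SGA~4. That is not what remains to be proved. After pushing down to $S$, the needed statement is the one the paper isolates as Proposition~\ref{prop:compat3}: for a pairing $K\otimes L\to\Ql(1)$ on a strictly Henselian trait, the Hochschild--Serre edge map $e_1\colon \Fil^1H^1(\eta,K_\eta)\to H^1(\eta,\cH^0(K_\eta))$, followed by the duality against $\cH^0(L_{\bar\eta})^I$ and by $sp^\vee$, agrees up to sign with the boundary $\partial\colon H^1(\eta,K_\eta)\to H^2_s(S,K)$ followed by the pairing with $H^0(s,L_s)$. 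This is not a formal consequence of the localisation triangle $i_*Ri^!\to\mathrm{id}\to Rj_*j^*$ together with the trace map: its proof requires (a) the compatibility of the boundary map with cup products, (b) the compatibility of Hochschild--Serre edge homomorphisms with cup products --- both nontrivial, imported in the paper from \cite[0.1, 0.4]{SH} --- and (c) the anticommutativity relating the boundary $H^1(\eta,A(1))\to H^2_s(S,A(1))$, the cycle class of the closed point, and the Kummer isomorphism $H^1(\eta,A(1))\simeq A$, which is \cite[``Cycle'', 2.1.3]{SGA4.5}; moreover the whole comparison is only made after pairing the diagram against a class in $H^0(S,L)$, i.e.\ against a section extending over $S$, a device absent from your plan. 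Without (a)--(c) the identification of the transpose of specialisation with the edge map into $H^1(I,\cdot)$ --- exactly the point your second paragraph worries about --- is not established, so the decisive compatibility is left unproved.
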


This will be reformulated in a more general setting in the next
section. First, we draw some consequences from it. We recall that the
monodromy-weight conjecture implies:

\begin{conj}[Local invariant cycle ``theorem'']
  \label{conj:ict}
  Suppose that $X$ is regular. Then for every $j$ the specialisation map
  \[
  sp\colon H^j(\Ybar,\Ql) \to H^j(\Ubar,\Ql)^I
  \]
  is a surjection.
\end{conj}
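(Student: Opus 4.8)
The plan is to prove the statement by a weight argument in which the only conjectural input is the monodromy--weight conjecture \ref{conj:mwc}. Since $I$ acts trivially on $H^j(\Ybar,\Ql)$ and $sp$ is $G$-equivariant, one automatically has $\im(sp)\subseteq H^j(\Ubar,\Ql)^I$, so everything comes down to the reverse inclusion. I would deduce it from two facts: (a) unconditionally, the cokernel of $sp\colon H^j(\Ybar,\Ql)\to H^j(\Ubar,\Ql)^I$ has weights $\ge j+1$; and (b) granting \ref{conj:mwc}, $H^j(\Ubar,\Ql)^I$ has weights $\le j$. As that cokernel is a quotient of $H^j(\Ubar,\Ql)^I$, its Frobenius eigenvalues would then have to be of weight both $\le j$ and $\ge j+1$, so it vanishes.

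For (a): let $\fo^{nr}$ be the maximal unramified extension of $\fo$ (a strict henselisation), $\widetilde X=X\otimes_\fo\fo^{nr}$ --- still regular, since $\fo\to\fo^{nr}$ is ind-\'etale --- and $F^{nr}=\operatorname{Frac}\fo^{nr}$. The localisation triangle for the closed fibre $\Ybar=\widetilde X_s\hookrightarrow\widetilde X$, together with the proper base change isomorphism $H^j(\widetilde X,\Ql)\simeq H^j(\Ybar,\Ql)$, gives an exact sequence
\[
H^j(\Ybar,\Ql)\xrightarrow{\ \alpha\ }H^j(U\otimes_FF^{nr},\Ql)\longrightarrow H^{j+1}_{\Ybar}(\widetilde X,\Ql),
\]
and, unwinding the construction of nearby cycles, $sp$ is the composite of $\alpha$ with the Hochschild--Serre edge map $e\colon H^j(U\otimes_FF^{nr},\Ql)\to H^j(\Ubar,\Ql)^I$. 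Because $\ell\ne p$ the inertia group $I$ has $\ell$-cohomological dimension $\le1$, so the Hochschild--Serre spectral sequence for $\Ubar\to U\otimes_FF^{nr}$ has only the two rows $a=0,1$, hence degenerates and $e$ is \emph{surjective}. Therefore $H^j(\Ubar,\Ql)^I/\im(sp)$ is a subquotient of $H^{j+1}_{\Ybar}(\widetilde X,\Ql)$ (a quotient of $\operatorname{coker}\alpha$, which embeds in that local cohomology group). Finally, $H^{j+1}_{\Ybar}(\widetilde X,\Ql)$ has weights $\ge j+1$: since $\widetilde X$ is regular one d\'evissages $\Ybar$ into locally closed subschemes $Z$ which are regular --- hence, $\kbar$ being perfect, smooth over $\kbar$ --- of some codimension $c$ in $\widetilde X$; Gabber's absolute purity gives $H^m_Z(\widetilde X,\Ql)\cong H^{m-2c}(Z,\Ql)(-c)$, and by Deligne's theorem \cite{DeW2} the cohomology of the smooth $\kbar$-variety $Z$ in degree $m-2c$ has weights $\ge m-2c$; the Tate twist raises this to $\ge m$, so assembling over the strata $H^{j+1}_{\Ybar}(\widetilde X,\Ql)$ has weights $\ge j+1$. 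This proves (a).

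For (b): by Grothendieck's quasi-unipotence theorem $H^j(\Ubar,\Ql)^I\subseteq\ker N$, and \ref{conj:mwc} says precisely that the weight filtration on $H^j(\Ubar,\Ql)$, shifted by $j$, is the monodromy filtration of $N$; for that filtration $N$ is strict and injective on $\gr^W_w$ whenever $w>j$, so $\ker N$ --- a fortiori $H^j(\Ubar,\Ql)^I$ --- is contained in the part of weight $\le j$. Combining with (a) yields $\im(sp)=H^j(\Ubar,\Ql)^I$.

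The conjectural hypothesis \ref{conj:mwc} enters only in (b). In (a) the one point requiring real care is the weight estimate for $H^{j+1}_{\Ybar}(\widetilde X,\Ql)$; this is exactly where the regularity of $X$ is indispensable --- for a singular model the obstruction group $H^1(I,\cF_{\bar\eta})^\G$ can genuinely be larger --- and it rests on Gabber's absolute purity together with the lower weight bound for smooth varieties over finite fields. One should also check --- routinely --- that the specialisation map of the statement, defined via nearby cycles, coincides with the composite $e\circ\alpha$ above, and that the weight filtration and the operators $N$ and Frobenius are all defined over $k$ so that the weight and strictness bookkeeping is legitimate.
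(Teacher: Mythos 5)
Your argument is correct, but be clear about what it proves: the statement labelled \ref{conj:ict} is a \emph{conjecture} in the paper, and no proof of it is given or claimed there --- the text only ``recalls'' that the monodromy--weight conjecture \ref{conj:mwc} implies it. Your proposal supplies exactly that deduction (it is necessarily conditional on \ref{conj:mwc}, as you say up front), and the deduction is the standard weight-theoretic one, carried out correctly. Part (b) --- quasi-unipotence giving $H^j(\Ubar,\Ql)^I\subseteq\ker N$, and \ref{conj:mwc} identifying $W$ (shifted by $j$) with the monodromy filtration so that $\ker N\subseteq W_j$ --- is fine; injectivity of $N$ on $\gr^W_w$ for $w>j$ follows from the isomorphisms $\bar N^r$, and ``strictness'' is not actually needed. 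Part (a) is the genuinely substantive half: factoring $sp$ through $H^j(X\otimes\fo^{nr},\Ql)\simeq H^j(\Ybar,\Ql)$ (proper base change, $f$ proper) and the surjective edge map coming from $\mathrm{cd}_\ell(I)\le 1$, and then bounding the obstruction by the local cohomology $H^{j+1}_{\Ybar}(X\otimes\fo^{nr},\Ql)$, whose weights are $\ge j+1$ by semi-purity (stratify $Y_{red}$ by regular strata over the perfect residue field, apply Gabber's absolute purity in the regular ambient scheme, and use the Weil~II lower bound for $H^i$ of smooth varieties over finite fields). This is where regularity of $X$ enters, correctly. Since the paper offers no argument to compare with, the only remarks worth making are: (i) an alternative, equally standard route to the weight bound in (a), closer to the tools the paper actually uses elsewhere (Rapoport--Zink), is to pass to a semistable model by alterations and read off weights from the weight spectral sequence, but your purity argument is cleaner precisely because it needs only regularity of $X$; and (ii) the small bookkeeping points you flag (that $sp$ defined via nearby cycles agrees with $e\circ\alpha$, that all maps are $\G$-equivariant so Frobenius eigenvalue comparisons are legitimate, and that the strata can be taken defined over a finite extension of $k$) are indeed routine and standard.
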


From \ref{prop:compat1} one then obtains immediately:

\begin{corol}[Jannsen]
\label{cor:uwe}
  Suppose that the Riemann-Roch transformation 
  \[
  \tau\colon \Kp{q-1}{d-n-1}Y\otimes\Ql \to
  H_{2d-2n+q+1}(\Ybar,\Ql(d-n+1))
  \]
  is injective, and that the local invariant cycle theorem
  \ref{conj:ict} holds for $(X,i-1)$. Then $\HMO^i(U,n)=\HMnr^i(U,n)$.
\end{corol}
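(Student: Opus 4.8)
The plan is to realise $\HMO^i(U,n)$ and $\HMnr^i(U,n)$ as kernels (or subspaces of kernels) of two maps out of $K_q^{(n)}U$, $q=2n-i$, and then to compare those kernels using Proposition~\ref{prop:compat1}. Fix a prime $\ell\ne char(k)$ for which the two hypotheses hold, put $\cF_{\bar\eta}=H^{i-1}(\Ubar,\Ql(n))$, and let
\[
\beta\colon K_q^{(n)}U \xrightarrow{\AJl} H^1(G,\cF_{\bar\eta}) \longrightarrow H^1(I,\cF_{\bar\eta})^\G
\]
be the diagonal composite of \eqref{eq:diag2}. Since $\HMnr^i(U,n)$ is by definition an intersection of kernels of this kind taken over all $\ell\ne char(k)$, one has $\HMnr^i(U,n)\subseteq\ker\beta$. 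On the other hand $\HMO^i(U,n)=\ker\partial$, where $\partial\colon K_q^{(n)}U\to\Kp{q-1}{n-d-1}Y$ is the localisation boundary map (written $\alpha$ in \eqref{eq:diag2}): this is because the localisation sequence is exact and the restriction $K_qX\to K_qU$ respects the Adams eigenspaces, so that $\im[K_q^{(n)}X\to K_q^{(n)}U]=K_q^{(n)}U\cap\ker\partial$, while by \S3 the boundary carries $K_q^{(n)}U$ into $\Kp{q-1}{n-d-1}Y$. Because $\beta$ factors through $\partial$ in \eqref{eq:diag2}, we always have $\HMO^i(U,n)\subseteq\HMnr^i(U,n)\subseteq\ker\beta$, so it remains to prove the reverse inclusion $\ker\beta\subseteq\ker\partial$.

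To obtain this I would apply Proposition~\ref{prop:compat1} to elements of $K_q^{(n)}U$. Composed with the injection $H^1(I,\cF_{\bar\eta})^\G\hookrightarrow\bigl(H^{2d-i+1}(\Ubar,\Ql(d-n+1))^\vee\bigr)_I$ supplied by Poincar\'e duality, the proposition identifies $\beta$, up to sign, with the composite $sp^\vee\circ\kappa\circ\tau\circ\partial$, in which $\tau$ is the Riemann--Roch transformation, $\kappa$ is the Grothendieck--Verdier isomorphism, and $sp^\vee$ is the transpose of the specialisation map $sp\colon H^{2d-i+1}(\Ybar,\Ql)\to H^{2d-i+1}(\Ubar,\Ql)^I$. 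Hence $\ker\beta=\ker(sp^\vee\circ\kappa\circ\tau\circ\partial)$, and to get $\ker\beta\subseteq\ker\partial$ it suffices that $\kappa$, $\tau$ and $sp^\vee$ be injective. But $\kappa$ is an isomorphism; $\tau$ is injective by the first hypothesis (tensoring a $\Q$-linear map with $\Ql$ does not enlarge its kernel); and $sp^\vee$, being the full transpose of $sp$, is injective as soon as $sp$ is surjective, which is precisely the local invariant cycle theorem \ref{conj:ict}. Here one uses the regularity of $X$: Poincar\'e--Verdier duality reconciles the instance of \ref{conj:ict} in degree $i-1$ assumed in the statement with the instance in the complementary degree $2d-i+1$ that \ref{prop:compat1} actually calls for. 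Putting these together yields $\ker\beta=\ker\partial$, and therefore $\HMO^i(U,n)=\HMnr^i(U,n)$.

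Granting Proposition~\ref{prop:compat1}, the corollary is thus a short diagram chase. The step I expect to require the most care is not conceptual but a matter of bookkeeping: carrying the Tate twists and cohomological degrees correctly through the composite, and in particular verifying that the form of the local invariant cycle theorem assumed in the statement really yields surjectivity of the specialisation map in the degree that \ref{prop:compat1} requires, and that the $\G$-invariance is propagated correctly at each stage so that \ref{prop:compat1}, which is written on the ambient cohomology groups, does restrict to the $\G$-invariant subspaces in which $\AJl$ and the image of $\tau\circ\partial$ take their values.
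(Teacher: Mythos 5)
Your overall architecture is the right one, and is essentially what the paper intends by ``obtains immediately'': identify $\HMO^i(U,n)$ with $\ker\bigl[\partial\colon K_q^{(n)}U\to\Kp{q-1}{n-d-1}Y\bigr]$ via the localisation sequence and compatibility of Adams eigenspaces, note $\HMnr^i(U,n)\subseteq\ker\beta$ for a fixed good $\ell$, and then compare the two kernels through Proposition~\ref{prop:compat1}. But you have misquoted the proposition at the decisive step, and the composite you write down does not typecheck. In the paper's diagram the map $sp^\vee$ is the \emph{last} arrow of the Abel--Jacobi route: it goes from $\bigl(H^{2d-i+1}(\Ubar,\Ql(d-n+1))^\vee\bigr)_I$ \emph{into} $H^{2d-i+1}(\Ybar,\Ql(d-n+1))^\vee$ (it is the transpose of $sp\colon H^{2d-i+1}(\Ybar)\to H^{2d-i+1}(\Ubar)^I$), so the commutativity asserted is $sp^\vee\circ\iota\circ\beta=\pm\,\kappa\circ\tau\circ\partial$, both routes ending on the $\Ybar$-side; this is also exactly the shape of Propositions~\ref{prop:compat2} and~\ref{prop:compat3}. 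Your version, $\iota\circ\beta=\pm\,sp^\vee\circ\kappa\circ\tau\circ\partial$, cannot be formed: $\kappa\circ\tau\circ\partial$ already lands in $H^{2d-i+1}(\Ybar,\Ql(d-n+1))^\vee$, which is the \emph{target}, not the source, of $sp^\vee$. Consequently your identity $\ker\beta=\ker(sp^\vee\kappa\tau\partial)$ is not what \ref{prop:compat1} gives, and your allocation of the hypotheses is reversed: with the correct orientation, $\beta(u)=0$ forces $\kappa\tau\partial(u)=0$ and hence $\partial u=0$ as soon as $\tau\otimes\Ql$ is injective --- the injectivity of $sp^\vee$ (i.e.\ \ref{conj:ict}) plays no role in the hard inclusion $\HMnr\subseteq\HMO$; it is what the diagram would require for the reverse inclusion $\HMO\subseteq\HMnr$, which in any case is unconditional by diagram \eqref{eq:diag1}.

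Two smaller points. First, your justification of $\HMO\subseteq\HMnr$ (``$\beta$ factors through $\partial$ in \eqref{eq:diag2}'') assumes the dashed arrow $\phi$, which at that point of the paper is only a desideratum; the correct unconditional source for this inclusion is diagram \eqref{eq:diag1}, where the composite $K_qX\to H^1(G,\cF_{\bar\eta})$ is shown to factor through $H^1(\G,\cF_{\bar\eta}^I)$. Second, \ref{prop:compat1} is really a statement on the subspace where $\AJl$ is defined (the $\Fil^1$ condition made explicit in \ref{prop:compat2}); elements of $\HMnr$ lie there by definition, so this costs nothing, but it deserves a sentence. In sum: the strategy and the conclusion are fine, and the repair is short --- replace your misstated identity by $sp^\vee\circ\iota\circ\beta=\pm\,\kappa\circ\tau\circ\partial$ and run the chase again, whereupon the nontrivial inclusion follows from the injectivity of $\tau$ alone and the local invariant cycle theorem is only needed if one insists on extracting the easy inclusion from the same diagram rather than from \eqref{eq:diag1} --- but as written the central deduction rests on a reading of Proposition~\ref{prop:compat1} that is not correct.
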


The hypothesis that $\tau$ is injective would be a consequence of
Jannsen's generalisation of the Tate conjecture:

\begin{conj}[Jannsen {\cite[12.4(a)]{JMM}}]
  \label{conj:gentate}
  If $Y$ is proper over a finite field $k$, of dimension $d$, then
  Frobenius acts semisimply on the $\ell$-adic homology of $\Ybar$,
  and for every $q$ and $m$ the Riemann-Roch transformation is an isomorphism
  \[
  \tau\colon \Kp{q}{m}Y\otimes\Ql \isomarrow
  H_{q-2m}(\Ybar,\Ql(-m))^\G.
  \]
\end{conj}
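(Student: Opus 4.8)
The plan is to reduce the conjecture, for an arbitrary proper $Y/k$, to the case where $Y$ is smooth and projective, where it becomes a conjunction of standard conjectures. As preliminary reductions, both $G$-theory $K'_\ast$ and $\ell$-adic \'etale homology $H_\ast(\Ybar,\Ql)$ are unchanged on passing to $Y_{\mathrm{red}}$, and the Riemann--Roch transformation $\tau$ is compatible with proper pushforward, with pullback along finite flat maps, and with the localisation long exact sequences on both sides; so I may replace $Y$ by $Y_{\mathrm{red}}$ and proceed by Noetherian induction on $\dim Y$, the case $\dim Y\le 0$ reducing to Quillen's computation of the $K$-theory of a finite field.

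For the inductive step I would apply Chow's lemma and then de Jong's theorem on alterations --- which here does the work that resolution of singularities does in Jannsen's original treatment in \cite[Ch.~12]{JMM} --- to produce a proper surjective, generically finite $\pi\colon Y'\to Y$ with $Y'$ integral, smooth and projective over a finite extension of $k$. Let $V\subset Y$ be a dense open over which $\pi$ is finite flat of some degree $e>0$, put $Z=Y\setminus V$, $V'=\pi^{-1}(V)$, $Z'=\pi^{-1}(Z)$; since $Y'$ is integral and $\pi$ is surjective, $\dim Z<\dim Y$ and $\dim Z'<\dim Y$. Now chain together: the conjecture for $Y'$ (the base case) gives it for the open $V'$ via the localisation sequences, granting it for the lower-dimensional $Z'$; transfer along $\pi|_{V'}\colon V'\to V$ --- along which the composite of pullback $\pi^\ast$ with the transfer is multiplication by $e$, hence rationally invertible --- realises $K'_\ast(V)\otimes\Q$ and $H_\ast(\overline V,\Ql)$, compatibly with $\tau$, as direct summands of the corresponding groups for $V'$, giving the conjecture for $V$; and finally the localisation sequences for $Z\subset Y$, together with the conjecture for the lower-dimensional $Z$, give it for $Y$. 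At each step $\tau$ furnishes a commutative ladder between the two long exact sequences; the point where the asserted semisimplicity of Frobenius is used is that, after applying $(-)^\G$, the $\ell$-adic side remains exact --- the weight-graded pieces of $H_\ast(\Ybar,\Ql)$ are subquotients of $\ell$-adic homology of smooth projective varieties, hence semisimple as $\G$-modules, and pieces of distinct weight carry Frobenius eigenvalues of distinct absolute value, so the weight filtration splits canonically and semisimplicity, along with the needed exactness, propagates. Passage to a finite extension of $k$ is harmless, restriction of scalars being compatible with $\tau$ and the transfer $\G_{k'}\to\G_k$.

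For $Y$ smooth projective of dimension $d$ one has $K'_\ast(Y)=K_\ast(Y)$; writing $n=d+m$, Poincar\'e duality identifies $H_{q-2m}(\Ybar,\Ql(-m))^\G$ with $H^{2n-q}(\Ybar,\Ql(n))^\G$, and $\Kp{q}{m}Y\otimes\Q$ with $K^{(n)}_q(Y)\otimes\Q=\HM^{2n-q}(Y,\Q(n))$, $\tau$ becoming the $\ell$-adic regulator. In this case the conjecture is equivalent to the conjunction of: (a) the Beilinson--Parshin conjecture, $K_q(Y)\otimes\Q=0$ for $q>0$, which forces the source to vanish unless $q=0$, reducing everything to $q=0$, $0\le n\le d$; (b) the Tate conjecture, that $CH^n(Y)\otimes\Ql\to H^{2n}(\Ybar,\Ql(n))^\G$ is surjective; and (c) semisimplicity of Frobenius on $H^\ast(\Ybar,\Ql)$, which together with (b) also yields injectivity, hence bijectivity, of the cycle class map.

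The main obstacle is therefore not the reductive machinery above --- which is in substance Jannsen's --- but its input: the Beilinson--Parshin and (strong) Tate conjectures for smooth projective varieties over finite fields, which are among the deepest open problems in arithmetic geometry and are known only in special cases (abelian varieties, K3 surfaces, varieties dominated by products of curves, and the like). Consequently an \emph{unconditional} proof of the conjecture in full generality lies beyond present methods; what the argument honestly establishes is that (a), (b) and (c), for all smooth projective varieties over finite fields, imply Conjecture \ref{conj:gentate} for all proper $Y/k$.
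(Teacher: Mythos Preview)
The statement you were asked to prove is stated in the paper as a \emph{conjecture}, not a theorem; the paper gives no proof, and none is expected. What the paper does say, immediately after the statement, is that Jannsen showed in \cite[12.7]{JMM} that Conjecture~\ref{conj:gentate} is equivalent to the package of standard conjectures recorded as Conjecture~\ref{conj:parshin} (semisimplicity of Frobenius, the Tate conjecture, and Parshin's vanishing $K_q(Y)\otimes\Q=0$ for $q>0$) for smooth projective varieties over finite fields, and that Jannsen's reduction --- originally using resolution of singularities --- can be carried out instead with de~Jong's alterations.

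Your proposal is precisely a sketch of that reduction: Noetherian induction via localisation sequences, an alteration $Y'\to Y$ with $Y'$ smooth projective, transfer along the generically finite map to pass between $V$ and $V'$, and the observation that assumed semisimplicity on the smooth projective level forces the weight filtration to split, so that the $\ell$-adic localisation sequences stay exact after taking $\G$-invariants. You then correctly identify that the smooth projective base case is exactly (a)+(b)+(c), i.e.\ Conjecture~\ref{conj:parshin}, and that these are open. In other words, you have rediscovered (and outlined) the very equivalence the paper cites; there is nothing to compare against a ``paper's proof'' because there is none. Your final paragraph is the honest conclusion: what the argument establishes is the implication \ref{conj:parshin}$\Rightarrow$\ref{conj:gentate}, not the conjecture itself.

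One small caveat worth recording: in the localisation-sequence step you invoke exactness after $(-)^\G$ by appealing to semisimplicity. The cleanest way to justify this is not term-by-term but globally: granted (c), Deligne's weight formalism gives each $H_\ast(\overline{W},\Ql)$ a weight filtration with pure graded pieces that are subquotients of cohomology of smooth projective varieties, hence semisimple; distinct weights have Frobenius eigenvalues of distinct absolute value, so the filtration splits canonically and every such $\G$-module is semisimple. Then any short exact sequence of such modules splits, and $(-)^\G$ is exact on the whole ladder. This is what you said, but it is worth emphasising that the splitting is of the \emph{sequence}, not merely semisimplicity of the individual terms.
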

As is shown in \cite[12.7]{JMM}, this is equivalent to standard
conjectures for $K$-theory of nonsingular varieties over finite fields:
\begin{conj}[Tate, Parshin]
  \label{conj:parshin}
  Let $Y$ be proper and smooth over a finite field $k$.
  \begin{itemize}
  \item The action of $\Gal(\bar k/k)$ on $H^*(\Ybar,\Ql)$ is semisimple.
  \item The cycle class map $CH^*(Y)\otimes\Ql \to
    H^{2*}(\Ybar,\Ql(*))^{\Gal(\bar k/k)}$ is an isomorphism.
  \item If $q>0$, then $K_qY\otimes\Q=0$.
  \end{itemize}
\end{conj}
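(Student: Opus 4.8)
The plan is to take the three assertions in reverse order: reduce the vanishing of rational $K$-theory in positive degrees to the two cohomological statements together with the Weil conjectures; note that over a finite field the K\"unneth ingredient is unconditional, so one may argue one cohomological degree at a time; and then isolate the Tate conjecture as the single genuinely open input.

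First I would dispose of the third bullet. By the comparison of Quillen $K$-theory with higher Chow groups (Bloch, Levine) one has $K_q(Y)\otimes\Q\simeq\bigoplus_n CH^n(Y,q)\otimes\Q$, so it suffices to kill $CH^n(Y,q)\otimes\Q$ for all $q>0$; the base case $Y=\Spec k$ is Quillen's computation of $K_*(\mathbb F_q)$. For a general smooth projective $Y/k$, Deligne's bounds \cite{DeW2} say that geometric Frobenius acts on $H^j(\Ybar,\Ql)$ with eigenvalues pure of weight $j$, and --- granting the K\"unneth standard conjecture, established unconditionally below --- this splits the Chow motive of $Y$, after $\otimes\Q$, into pure summands; a weight count then forces $CH^n(Y,q)\otimes\Q=0$ for $q>0$ as soon as rational and numerical equivalence agree on $Y$ and $Y\times Y$, which is exactly what the first two bullets provide. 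This is essentially Geisser's reduction of Parshin's conjecture to the Tate conjecture; combined with the equivalence of Conjecture \ref{conj:gentate} with the present one recorded in \cite[12.7]{JMM} it brings the third bullet down to the first two, and I would quote these reductions rather than reprove them.

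Next I would attack the two cohomological bullets, which now carry the whole conjecture. The K\"unneth standard conjecture for $Y/k$ is unconditional: by Katz--Messing the K\"unneth projectors $\pi_i$ are algebraic, being polynomials in Frobenius, so one may work one cohomological degree at a time. In degree $i$, semisimplicity of the $\G=\Gal(\kbar/k)$-action on $H^i(\Ybar,\Ql)$ is equivalent to semisimplicity of geometric Frobenius $\Phi$; invoking the hard Lefschetz theorem in characteristic $p$ (Deligne, Weil II) one obtains a $\Phi$-equivariant perfect pairing $H^i\times H^{2d-i}\to\Ql(-d)$, where $d=\dim Y$, together with the Lefschetz decomposition into primitive parts, which reduces semisimplicity to the primitive pieces but leaves an obstruction concentrated in the generalised eigenspaces of $\Phi$ on which its characteristic polynomial is inseparable over $\Q$. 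For the cycle class map, the $\G$-equivariant image of $CH^n(Y)\otimes\Ql$ lies in $H^{2n}(\Ybar,\Ql(n))^\G$, and the asserted isomorphism packages (a) surjectivity onto those invariants --- the Tate conjecture in its usual form --- and (b) injectivity, i.e.\ the vanishing of codimension-$n$ cycles that become homologically trivial after $\otimes\Q$; standard arguments (Tate, Milne, Jannsen) derive (b) from (a) applied to $Y$ and $Y\times Y$ once $\Phi$ is semisimple, so together bullets one and two amount exactly to ``the Tate conjecture plus semisimple Frobenius''.

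The main obstacle, unsurprisingly, is the Tate conjecture itself --- and, at a lower level, semisimplicity of Frobenius outside the cases where it is known --- since no construction produces enough algebraic cycles to account for all the Frobenius-invariant classes on an \emph{arbitrary} smooth projective variety over a finite field. I would therefore organise the argument so that: (a) the reductions above bring the whole conjecture down to ``Tate $+$ semisimple Frobenius for all smooth projective varieties over finite fields''; (b) the cases in which this is a theorem are recorded --- abelian varieties, hence products of curves via their Jacobians (Tate, Faltings), as well as Fermat hypersurfaces and K3 surfaces --- and for such $Y$ all three bullets then hold unconditionally, which in the case of products of modular curves supplies precisely the $K$-theory bounds of Soul\'e \cite{SoK} used in Theorem \ref{thm:main}; and (c) the general statement is left as the conjecture, flagged as equivalent by \cite[12.7]{JMM} to Jannsen's Conjecture \ref{conj:gentate}. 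The one point in (b) requiring more than bookkeeping is semisimplicity beyond the abelian case, while in full generality step (a)'s appeal to the Tate conjecture remains the essential --- and, at present, irreducible --- gap.
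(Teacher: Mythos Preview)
The statement you are addressing is labelled \textbf{Conjecture} in the paper, not Theorem or Proposition; the paper offers no proof of it and does not claim one exists. It is recorded as a package of standard open problems (semisimplicity of Frobenius, the Tate conjecture, and Parshin's vanishing of rational higher $K$-theory), and the paper's sole comment is that by \cite[12.7]{JMM} it is equivalent to Conjecture~\ref{conj:gentate}. There is therefore no ``paper's own proof'' against which to compare your proposal.

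Your proposal is not a proof either, as you in effect acknowledge: step~(c) explicitly leaves the Tate conjecture and semisimplicity of Frobenius as unproved hypotheses, and these are exactly the first two bullets of the statement. What you have written is a (broadly correct) sketch of Geisser's reduction of the third bullet to the first two, followed by a survey of the special cases---abelian varieties, products of curves---where the first two are theorems. That is useful commentary, but it does not establish the conjecture in general, and the paper never pretends otherwise. For the paper's actual unconditional needs (Theorem~\ref{thm:main}), the full conjecture is not invoked; instead the paper uses Soul\'e's bounds \cite{SoK} directly on the class $\cC_k$ of varieties built from curves, bypassing the general statement entirely.
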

(Jannsen's proof that \ref{conj:parshin} implies
\ref{conj:gentate} assumes resolution of singularities, but one can
remove this by appealing instead to De~Jong's alterations theorem
\cite{DJ}.)

We now analyze the proof in more detail to obtain Theorem
\ref{thm:main}. Granted Proposition \ref{prop:compat1}, It suffices to
prove the following two Propositions.

\begin{prop}
  \label{prop:mwcurves}
  Let $U= C_1 \times\dots\times C_d$ be a product of smooth proper curves.
  Then for all $j$, the monodromy-weight conjecture holds for
  $H^j(\Ubar)$.
\end{prop}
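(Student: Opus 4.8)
The plan is to reduce the statement to the case of a single smooth proper curve via the Künneth formula and the compatibility of the monodromy operator with cup products. First I would choose a semistable model: after replacing $F$ by a finite extension (which is harmless, since the monodromy-weight conjecture for $H^j(\Ubar)$ over $F$ is equivalent to it over a finite extension — the filtrations and $N$ are Galois-equivariant and $N$ is unchanged up to the ramification index, which only rescales), each $C_j$ acquires a semistable model $\mathcal C_j/\fo$ with special fibre a reduced strict normal crossings divisor whose components are smooth curves meeting transversally. For a single semistable curve the monodromy-weight conjecture is classical: $H^0$ and $H^2$ are pure (with $N=0$), and for $H^1$ the operator $N$ is the one coming from the Picard-Lefschetz formula on the vanishing cycles, and the required isomorphism $\gr^W_2 H^1 \isomarrow \gr^W_0 H^1(-1)$ is exactly the statement that the ``cycle on cycle'' intersection pairing attached to the dual graph of the special fibre is nondegenerate — equivalently that the abelian and toric parts of the Jacobian are separated by the weight filtration, which is Grothendieck's semistable reduction picture.

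Next I would pass to the product. The key input is that for the semistable model $X = \prod \mathcal C_j$ (or a strict normal crossings alteration thereof), the Rapoport--Zink weight spectral sequence degenerates rationally (by weight reasons/Deligne), so that $H^j(\Ubar,\Ql)$ with its weight filtration is computed by the associated graded, and the Künneth isomorphism $H^j(\Ubar) \simeq \bigoplus_{j_1+\dots+j_d=j} \bigotimes_k H^{j_k}(\overline C_{k,\eta})$ is compatible with the weight filtrations and with the monodromy operators, in the sense that $N = \sum_k 1\otimes\cdots\otimes N_k \otimes\cdots\otimes 1$ acts as a sum of commuting nilpotent operators. Then the statement to prove is purely a statement in (multi)linear algebra: if $(V_k, N_k)$ are finite-dimensional vector spaces each equipped with a nilpotent $N_k$ for which the monodromy filtration $M_\bullet(N_k)$ (centred appropriately) satisfies the hard-Lefschetz-type isomorphism property, then the tensor product $\bigotimes V_k$ with $N = \sum N_k$ has the same property, with monodromy filtration the convolution $M_\bullet = \sum_k M_\bullet(N_k)$. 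This is a known lemma — it follows, for instance, from the Jacobson--Morozov theorem: each $N_k$ fits into an $\mathfrak{sl}_2$-triple acting on $V_k$, one takes the tensor product representation of $(\mathfrak{sl}_2)^d$ restricted diagonally to a single $\mathfrak{sl}_2$, and the monodromy filtration of $N$ is the filtration by eigenvalues of the diagonal semisimple element, for which the $\mathfrak{sl}_2$-module structure gives the desired isomorphisms automatically.

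The main obstacle — really the only point requiring care rather than citation — is matching the geometric weight filtration on $H^j(\Ubar)$ (defined via the Rapoport--Zink/monodromy spectral sequence, a priori only after an alteration) with the convolution of the monodromy filtrations coming from the Künneth factors. Two sub-issues arise: (1) one must know $N$ acts through the diagonal, i.e. that the monodromy of the product is the sum of the monodromies of the factors — this is standard, coming from the description of the nearby-cycles complex of a product as the (derived) tensor product of the nearby-cycles complexes of the factors; and (2) one must know the weight filtration on each tensor factor $H^{j_k}(\overline C_{k,\eta})$ is literally the monodromy filtration of $N_k$, i.e. the monodromy-weight conjecture for curves, which I will have established in the first step. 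Granting these, the purely algebraic tensor-product lemma finishes the proof. I would state and cite the linear-algebra lemma (it appears in work of Steenbrink--Zucker and of Deligne on Hodge theory, and in the $\ell$-adic setting in Ito's or Nakkajima's treatments of monodromy-weight for products) rather than reprove it.
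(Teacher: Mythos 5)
Your proposal is correct and follows essentially the same route as the paper: the monodromy-weight conjecture for a single curve (classical) combined with the K\"unneth formula and the tensor-product/convolution lemma for monodromy filtrations, which is exactly Deligne's result \cite[(1.6.9)]{DeW2} that the paper cites. Your extra care about $N$ acting diagonally and about matching weight filtrations is sound but amounts to an expanded version of the same one-line argument.
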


\begin{proof}
  The monodromy-weight conjecture is stable under products (by the
  K\"unneth formula and \cite[(1.6.9)]{DeW2}), so in particular it
  holds if $U$ is a product of curves (even for products
  of varieties of dimension at most $2$, by \cite{RZ}).
\end{proof}

\begin{prop}
  \label{prop:parshin-curves}
  Let $U=C_1 \times\dots\times C_d$ be a product of smooth proper
  curves. Then after replacing $F$ by a finite extension,
  $U$ admits a proper regular model $X/\fo$ for which:
  \begin{enumerate}
  \item $Y$ is a strict normal crossings divisor on $X$, and for every
    intersection $Z$ of components of $Y$, the $\G$-module $H^*(\bar
    Z,\Ql)$ is semisimple.
  \item the Riemann-Roch transformation on the homology of the special
    fibre
    \[
    \tau \colon H^\cM_{2m-j}(Y,m)\otimes\Ql \to H_{2m-j}(\Ybar,\Ql(m))^\G
    \]
    is an isomorphism for $m\le 1$.
  \end{enumerate}
\end{prop}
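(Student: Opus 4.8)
The plan is to reduce everything to an explicit semistable model built by hand from the curves, and then to invoke Soulé's vanishing of $K$-groups of products of curves over finite fields. First I would take each $C_j$ and, after a finite extension of $F$, choose a regular proper model $\mathcal C_j/\fo$ with semistable reduction, so that the special fibre $(\mathcal C_j)_s$ is a reduced normal crossings curve over $k$ whose normalised components and singular points are all split over $k$ (again enlarging $F$ if necessary). The model $X$ will be a regular resolution of the fibre product $\mathcal C_1\times_S\cdots\times_S\mathcal C_d$. For $d$ curves the product of semistable curves is \'etale-locally a product of nodes, hence has toric singularities that are resolved by a standard toroidal/combinatorial procedure; after the resolution $Y=X_s$ is a strict normal crossings divisor whose strata $Z$ are products of (normalisations of components of the $(\mathcal C_j)_s$) with toric varieties coming from the resolution, all defined and split over $k$. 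One should check that one may take this resolution without introducing components of $Y$ with bad cohomology.

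For part (i), I would argue that every intersection $Z$ of components of $Y$ is, up to the toric resolution data, a product of smooth projective curves over $k$ with split toric varieties, hence its cohomology $H^*(\bar Z,\Ql)$ is a sum of Tate twists of the $H^1$'s of the curves involved. Since the Frobenius action on $H^1$ of a smooth projective curve over a finite field is semisimple --- this is a theorem (Weil, via the semisimplicity of the Frobenius on abelian varieties / the polarisation argument of \cite{JMM} in the curve case) --- and semisimplicity is preserved under tensor products and direct sums, the $\G$-module $H^*(\bar Z,\Ql)$ is semisimple. The toric contributions are themselves sums of Tate classes, so they cause no difficulty.

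For part (ii), the Riemann-Roch transformation $\tau\colon H^\cM_{2m-j}(Y,m)\otimes\Ql\to H_{2m-j}(\Ybar,\Ql(m))^\G$ for $m\le1$ is what must be shown to be an isomorphism. Using the localisation (weight) spectral sequence for the stratification of $Y$ by the smooth strata $Z$, both sides are assembled from the corresponding transformations for the $Z$'s; by a standard d\'evissage it suffices to treat each smooth projective $Z/k$ and the range $m\le1$, i.e.\ homology in degrees $\ge 2m-j$ with $m=0$ or $m=1$. For $m=1$ the relevant group is generated by divisor classes, and the statement is the (known) Tate conjecture for divisors on $Z$ together with $CH^1(Z)\otimes\Q\xrightarrow{\sim}H^2(\bar Z,\Ql(1))^\G$; for $m=0$ it is the statement that $K_0(Z)\otimes\Q$ maps isomorphically onto $\bigoplus_j H^{2j}(\bar Z,\Ql(j))^\G$, again the Tate conjecture for the (Tate-type) classes that actually occur, plus $K_q Z\otimes\Q=0$ for $q>0$ --- and here is where Soulé's theorem \cite{SoK} on $K$-groups of products of curves (and of the smooth split varieties appearing as strata) enters, giving the needed vanishing. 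Matching the two spectral sequences (K-theoretic coniveau vs.\ weight) compatibly with $\tau$, as in \cite[\S12]{JMM}, then yields the isomorphism in the stated range.

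The main obstacle I expect is part (ii): controlling the Riemann-Roch transformation on the \emph{singular} variety $Y$, rather than on its smooth strata, requires the comparison of the coniveau filtration on $K'$-theory with the weight filtration on $\ell$-adic homology to be exact and $\tau$-compatible in the relevant degrees --- essentially Jannsen's argument that \ref{conj:parshin} implies \ref{conj:gentate}, but now only needed (and only true unconditionally) in the restricted range $m\le1$ and only for the specific strata that arise. Getting the resolution $X$ of $\prod_S\mathcal C_j$ to be simultaneously regular, strict-normal-crossings, and to have all strata split over a fixed finite extension of $F$ is a second, more bookkeeping-type obstacle, handled by the toric nature of the singularities but needing care about which extension of $F$ is required.
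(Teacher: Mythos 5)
Your construction of the model and your part (i) follow essentially the same route as the paper: semistable models of the factors after a finite extension making all components and double points split, the fibre product, a resolution of the resulting product-of-nodes singularities (the paper uses the resolution of \cite[Lemme 5.5]{DeFM}, described as an iterated blowup in \cite[\S2]{SMM}), and semisimplicity of the cohomology of the strata coming from Weil's theorem for $H^1$ of curves together with Tate twists and tensor operations. One caveat already here: what the later argument needs about the strata is not that they are ``smooth split varieties'', nor literally products of curve components with toric varieties, but that every intersection $Y_J$ lies in the class $\cC_k$ generated from products of smooth proper curves by projective bundles and blowups --- that is exactly what the iterated-blowup description delivers, and it is only for this class that the Soul\'e-type statements you need in (ii) are available (their Chow motives are sums of $h^1(D_1)\otimes\cdots\otimes h^1(D_s)\otimes\mathbb{L}^{\otimes t}$).

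The genuine gap is in part (ii), where the unconditional inputs are misidentified. For $m=1$ the motivic group occurring on a stratum $Z$ is $CH_1(Z)$, i.e.\ one-dimensional cycles of codimension $\dim Z-1$, not divisors, so ``the known Tate conjecture for divisors'' is not the relevant statement once $\dim Z\ge 3$; for $m=0$ it is $CH_0(Z)$, not $K_0(Z)\otimes\Q$, and the isomorphism $K_0(Z)\otimes\Q\simeq\bigoplus_j H^{2j}(\bar Z,\Ql(j))^\G$ you invoke is the full Tate--Parshin package, open even for products of curves. Likewise ``$K_qZ\otimes\Q=0$ for $q>0$'' is Parshin's conjecture \ref{conj:parshin}, which Soul\'e does not prove; what Soul\'e's theorem (Theorem \ref{thm:soule}) actually gives is the vanishing of the Adams eigenspaces $K_a^{(b)}Z$ for $a>0$ and $b\ge\dim Z-1$, together with the cycle class isomorphism $CH_m(Z)\otimes\Ql\to H^{2(\dim Z-m)}(\bar Z,\Ql(\dim Z-m))^\G$ for $m\le 1$. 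This graded, near-top-weight statement is precisely the point: in the descent spectral sequence for the simplicial scheme of intersections the motivic $E_1$-terms are $K_{-s}^{(\dim Y_J-m)}Y_J$, so for $m\le1$ only Adams weights $\ge\dim Y_J-1$ ever occur, and Soul\'e suffices --- whereas as written your d\'evissage appeals to open conjectures. You should also make explicit why the $\ell$-adic side collapses: the $E_1$-terms of the corresponding $\ell$-adic spectral sequence are pure of weight $s$ and semisimple by (i), so their $\G$-invariants vanish for $s\ne0$; the proof is then completed by comparing the two resulting complexes in degree $s=0$ via the cycle class maps $CH_m(Y_J)\otimes\Ql\to H^{2(\dim Y_J-m)}(\bar Y_J,\Ql(\dim Y_J-m))^\G$.
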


\begin{proof}

  We first need to construct a suitable regular model for $U$. After
  passing to a finite extension of $F$ we may assume that each factor
  $C_\mu$ has semistable reduction, and further has a semistable model
  $D_\mu$ whose special fibre is a reduced strict normal crossing
  divisor, whose components and singular points are all rational over
  the residue field. Let $X'=\prod D_\mu$. Then $X'$ is regular apart from
  singularities which are locally smooth over a product of double
  points; that is, locally isomorphic, for the \'etale topology, to
  \[
  \Spec \fo[x_1,y_1,\dots,y_r,y_r,z_1,\dots,z_s]
  /(x_1y_1-\pi_F,\dots,x_ry_r-\pi_F). 
  \]
  Take $X\to X'$ to be the resolution given in \cite[Lemme 5.5]{DeFM}. The
  special fibre $Y=\cup Y_\alpha$ is a normal crossings divisor in
  $X$. Write as usual 
  \begin{align*}
    Y_J& = \bigcap_{\alpha\in J} Y_\alpha &\text{for  
      $J\subset \{\alpha\}$}
    \\
    \Y{q}&=\coprod_{\#J=q+1} Y_J &\text{for $q\ge0$}
  \end{align*}
  Then the description of the desingularisation as an iterated blowup
  \cite[\S2]{SMM} shows that each $Y_J$ belongs to $\cC_k$, the
  smallest class of smooth and proper schemes over $k$ such that
  \begin{enumerate}
  \item $\cC_k$ contains all products of smooth proper geometrically
    connected curves;
  \item If $W$ is in $\cC_k$ and $P\to W$ is a projective bundle, then
    $P$ is in $\cC_k$;
  \item If $Z\subset W$ with $W$ and $Z$ both in $\cC_k$, then the
    blowup of $W$ along $Z$ is in $\cC_k$.
  \end{enumerate}
  If $W$ is in $\cC_k$ and $\dim W=d$, then the Chow motive of $W$ can
  be computed using the fomulae for the Chow motives of projective
  bundles and blowups, and it is a sum of Chow motives of the form
  $\otimes_{1\le j\le s}h^1(D_j)\otimes\mathbb{L}^{\otimes t}$ for
  curves $D_j$ and some $t\ge0$ with $s+t\le d$. From this it follows
  that the $\ell$-adic cohomology of $Y_J$ is semisimple.

  Together with the inclusion maps $Y_{J'}\subset Y_J$ for $J'\supset
  J'$, the $\Y{q}$ form a strict simplicial scheme
  \[
  \xymatrix{
    {}\Y{\bullet} =  \Bigl[\ \cdots \ 
      \ar@<1.5ex>[r] \ar@{}@<0.5ex>[r]|-{\vdots} \ar@<-1.5ex>[r]
    \Bigr. &
      {}\ \Y{2} \ \tplar & {}\  \Y{1}\ \dblar & {} \Bigl. \ \Y{0} \Bigr]
  }
  \]
  and the homology, both $\ell$-adic and motivic, of $Y$ is computed
  by a spectral sequence: 
  \begin{align}
    \label{eq:homss}
    {}^{\cM}E_1^{rs}&=  H^{\cM}_{2m-s}(\Y{-r}, m) \Rightarrow
    H^{\cM}_{2m-r-s}(Y,m) \notag
    \\
    {}^{\ell}E_1^{rs}&=  H_{2m-s}(\Yb{-r}, m) \Rightarrow
    H_{2m-r-s}(\Ybar,m) 
  \end{align}
  In the $\ell$-adic spectral sequence, since the $Y_J$ are smooth and
  proper we can rewrite the $E_1$ terms as
  \[
  {}^\ell E_1^{rs} = H^{2d+2r-2m+s}(\Yb{-r}, d+r-m)
  \]
  which is pure of weight $s$, and semisimple by (i). So
  the term $({}^\ell E_1^{rs})^\G$ vanishes unless $s=0$, and so we
  may conclude that, after passing to $\G$-invariants, the spectral
  sequence degenerates to an identity
  \[
  H_{2m-j}(\Ybar,\Ql(m))^\G = H_j \left[
    H_{2m}(\Yb{\bullet},\Ql(m))^\G\right].
  \]
  Consider now the motivic spectral sequence. Its $E_1$-terms may be
  computed as $K$-theory:
  \[
  {}^{\cM}E_1^{rs} = \HM^{2d+2r-2m+s}(\Y{-r}, d+r-m) =
  K_{-s}^{(d+r-m)}\Y{-r}.
  \]
  We can then apply the following trivial extension of \cite[Theorem 4]{SoK}.

  \begin{thm}[Soul\'e]
    \label{thm:soule}
    Let $Z$ be in $\cC_k$, of dimension $\le d$. Then
    \begin{enumerate}
    \item for every $a>0$ and every $b\ge d-1$, $K_a^{(b)}Z=0$; and
    \item for $m=0$, $1$ the cycle class map $CH_m(Z)\otimes\Ql \to
      H^{2(d-m)}(\bar Z,\Ql(d-m))$ is an isomorphism.
    \end{enumerate}
\end{thm}
  
  \begin{proof}
    As observed above, the Chow motive of $Z$ is a submotive of the
    motive of the product of $d$ curves, to which Soul\'e's result
    applies. 
  \end{proof}
  
  In the present case, since $\dim \Y{-r}=d+r$, part (i) gives
  ${}^{\cM}E_1^{rs} =0 $ for all $s\ne0$, provided $m\le1$. Therefore
  the spectral sequence also reduces to an identity
  \[
  H^{\cM}_{2m-j}(Y,m) = H_j \left[H^{\cM}_{2m}(\Y{\bullet},m)\right]
  = H_j\left[CH_m(\Y{\bullet})\otimes\Q\right].
  \]
  By (ii) we also have for every $m\le 1$ an isomorphism of
  homological complexes
  \[
  CH_m(\Y{\bullet})\otimes\Ql \to H_{2m}(\Yb{\bullet},\Ql(m))^\G
  \]
  (for $m<0$ both complexes are obviously zero). Therefore by
  comparing homology we get that $\tau$ is an isomorphism.
\end{proof}

\clearpage
\section{Homological setting}

In this section, $S=\Spec\fo$ is to be any Henselian trait (the
spectrum of a Henselian discrete valuation ring), with generic and
closed points $\eta$, $s$, of residue characteristic different from
$\ell$, and $f\colon X \to S$ any quasi-projective and flat morphism
of relative dimension $d$. Label the morphisms:
\[
\xymat{
  Y \har[r]^{g} \ar[d]_{f_s} & X \ar[d]^f & U \hhar[l]_{h}
  \ar[d]^{f_\eta}
  \\
  s \har[r]_{i} & S & \eta \hhar[l]^{j}
}
\]
We will replace $K$-theory by $K'$-theory and \'etale cohomology by
homology. We review some facts from \cite{SoOp}. Recall that when $U$ is
smooth, the $\gamma$-filtration $F_\gamma^\bullet$ on $K_qU$ satisfies
\[
(F_\gamma^nK_qU)\otimes\Q = \bigoplus_{m\ge n} K^{(m)}_qU.
\]
In general one has an increasing filtration $F_\bullet$ on
$K'U\otimes\Q$ (defined by embedding $U$ in a smooth scheme $Z$ and
taking a shift of the $\gamma$-filtration on $K^ZU=K'U$). There are
modified Adams operators $\phi^k$ on $K'$-theory and, if
$\Kp{q}{n}{U}\subset K_q'U\otimes\Q$ denotes the
$(\phi^k=k^m)$-eigenspace, then
\[
F_{-n}(K'_qU\otimes\Q) = \bigoplus_{m\ge n}\Kp{q}{m}U.
\]
When $U$ is smooth the isomorphism $K_qU\isomarrow K'_qU$ carries
$F_\gamma^n(K_qU\otimes\Q)$ to $F_{d-n}(K'_qU\otimes\Q)$ and therefore
induces  isomorphisms $K_q{(n)}U \isomarrow \Kp{q}{n-d}U$.

In \cite{Gi} there are defined $\ell$-adic Riemann-Roch transformations
\[
\tau\colon K_q'U \to H_{q-2m}(U,\Ql(-m))
\]
whose target is $\ell$-adic homology, defined as 
\[
H_{-j}(U,\Ql(-m))= H^j(U,Rf_\eta^!\Ql(m)).
\]
When $U$ is smooth, the Riemann-Roch theorem shows that for the Adams
eigenspaces there is a commutative diagram
\[
\xymatrix{
K_q^{(n)} \ar[r]^-{ch} \ar[dd]_{\simeq} &
H^{2n-q}(U,\Ql(n))\ar[d]^{\simeq}_{(P.D.)}
\\
& H^{2n-2d-q}(U,Rf_\eta^!\Ql(n-d))\eqar[d]
\\
\Kp{q}{n-d}U \ar[r]^-{\tau} & H_{q+2d-2n}(U,\Ql(d-n))
}
\]
where the isomorphism labelled $(P.D.)$ is the ``Poincar\'e duality''
isomorphism given by $Rf_\eta^!\Ql=\Ql(d)[2d]$.

All this applies equally to $Y$. In \'etale homology there is a
boundary map
\[
\partial_\ell\colon H_{-i}(U,\Ql(-m)) \to H_{-i+1}(Y,\Ql(-m+1))
\]
defined as the composite
\begin{align*}
  H_{-i}(U,\Ql(-m))=H^i(U,Rf_\eta^!\Ql(m))
  \mapright{\partial}
  &H^{i+1}_Y(X,Rf^!\Ql(m))
  \\
  =& H^{i+1}(Y,Rg^!Rf_s^!\Ql(m))
  \\
  =&H^{i+1}(Y,Rf_s^!Ri^!\Ql(m))
  \\
  =&H^{i-1}(Y,Rf_s^!\Ql(m-1))
  \\
  =& H_{-i+1}(Y,\Ql(-m+1))
\end{align*}
using the purity $Ri^!\Ql=\Ql(-1)[-2]$ on $S$. The boundary maps
$\partial_{\cM}$ and $\partial_\ell$ in
$K'$-theory and \'etale homology are compatible: the square
\begin{equation}
  \label{eq:compat-bdy}
  \xymatrix{
    {}\Kp qmU \ar[r]^-{\tau} \ar[d]^{\partial_{\cM}}
    & H_{q-2m}(U,\Ql(-m))\ar[d]^{\partial_\ell}
    \\
    {}\Kp{q-1}{m-1}Y \ar[r]^-{\tau} & H_{q-2m+1}(Y,\Ql(-m+1))
  }
\end{equation}
is commutative, cf.~\cite[end of \S8.1]{JMM}. (The strange numbering
of the homological boundary map comes from the equality of the
dimensions of $U$ and $Y$; by considering $U$ as having dimension
$(d+1)$ --- as for example is done in \cite{Lev} --- would lead to a
more natural numbering).

We have a Hochschild-Serre spectral sequence in homology:
\[
E_2^{ab}=H^a(G,H_{-b}(U,\Ql(\bullet))) \Rightarrow
H_{-a-b}(U,\Ql(\bullet))
\]
and therefore, if $\Fil^n$ is the abutment filtration, so that
\[
\Fil^1 H_*(U,\Ql(\bullet))=\ker\left[ H_*(U,\Ql(\bullet)) \to
  H_*(\Ubar,\Ql(\bullet)) \right]
\]
there is an edge homomorphism
\[
e_1\colon \Fil^1 H_j(U,\Ql(\bullet)) \to H^1(G,
H_{j+1}(U,\Ql(\bullet))).
\]
Let $(\Kp{q}{m}{U})^0=\tau^{-1}(\Fil^1 H_{q-2m}(U,\Ql(-m)))\subset
\Kp{q}{m}{U}$.  We can then state the homological generalisation of
\ref{prop:compat1}. Let 
\[
sp' \colon H_*(\Ubar,\Ql(\bullet))_I \to H_*(\Ybar,\Ql(\bullet))
\]
be the transpose, for Grothendieck-Verdier duality, of the
specialisation map
\[
sp \colon H^*_c(\Ybar,\Ql(\bullet)) \to H^*_c(\Ubar,\Ql(\bullet))^I.
\]
\begin{prop}
  \label{prop:compat2}
  The following diagram is commutative up to sign:
  \[
  \xymatrix{
  {}(\Kp{q}{m}{U})^0 \ar[d]^{\tau} \ar[r]^{\partial_{\cM}} &
  {}\Kp{q-1}{m-1}Y \ar[d]^{\tau}
  \\
  \Fil^1 H_{q-2m}(U,\Ql(-m)) \ar[d]^{e_1} 
  & H_{q-2m+1}(Y,\Ql(1-m)) \ar[d]
  \\
  H^1(G,H_{q-2m+1}(\Ubar,\Ql(-m))) \ar[d]
  & H_{q-2m+1}(\Ybar,\Ql(1-m))
  \\
   H^1(I,H_{q-2m+1}(\Ubar,\Ql(-m))) \har[r]
  & H_{q-2m+1}(\Ubar,\Ql(1-m))_I \ar[u]_{sp'}
  }
  \]
\end{prop}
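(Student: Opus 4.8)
The strategy is to establish commutativity of the diagram square by square, relating the $K$-theoretic boundary map to the homological one via the Riemann--Roch transformation $\tau$, and then identifying the edge homomorphism $e_1$ of the Hochschild--Serre spectral sequence with the composite coming from the specialisation map. The top square is exactly the already-proven compatibility \eqref{eq:compat-bdy}, applied with the substitution $n=m$; it holds on all of $\Kp{q}{m}{U}$, hence in particular on the subspace $(\Kp{q}{m}{U})^0$. The only point to check is that $\tau$ carries $(\Kp{q}{m}{U})^0$ into $\Fil^1 H_{q-2m}(U,\Ql(-m))$, which is immediate from the definition of $(\Kp{q}{m}{U})^0$ as $\tau^{-1}$ of that filtration step.

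Next I would handle the bottom portion. The left column --- $\Fil^1 H_{q-2m}(U,\Ql(-m)) \to H^1(G,H_{q-2m+1}(\Ubar,\Ql(-m))) \to H^1(I,H_{q-2m+1}(\Ubar,\Ql(-m)))^\G$, and then the horizontal injection into the $I$-coinvariants of $H_{q-2m+1}(\Ubar,\Ql(1-m))$ --- is pure Galois cohomology: the first two maps are the edge homomorphism $e_1$ followed by restriction to inertia, and the last is the inflation-restriction identification $H^1(I,V)^\G \cong \Hom_\G(\Ql,V_I(-\text{twist}))$ coming from the fact that $I$ acts on $\Ql$ trivially and $\G$ acts on $I^{ab}(1)$ through the cyclotomic character. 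So the real content is the commutativity of the remaining square relating the geometric generic-fibre homology to the special-fibre homology: one must show that the diagram
\[
\xymat{
  \Kp{q-1}{m-1}Y \ar[r]^-{\tau} & H_{q-2m+1}(Y,\Ql(1-m)) \ar[r] & H_{q-2m+1}(\Ybar,\Ql(1-m))
}
\]
composed appropriately agrees, up to sign, with the transpose specialisation map $sp'$ applied to the image of the generic-fibre class in the $I$-coinvariants. The key here is to trace through the definition of $\partial_\ell$ given in the excerpt as a composite of a cohomological boundary map $H^i(U,Rf_\eta^!\Ql(m)) \to H^{i+1}_Y(X,Rf^!\Ql(m))$ with purity isomorphisms, and to recognise this local-cohomology boundary, after passing to the geometric situation over $\kbar$, as the transpose of the specialisation map via Grothendieck--Verdier duality between $Rf_s^!$ and $Rf_s{}_!$. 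Concretely, $sp'$ was defined precisely as this transpose, so the matching should be a formal consequence of the compatibility of duality with the localisation triangle $i_*Ri^! \to \mathrm{id} \to Rj_*j^* \to$ on $X$.

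The step I expect to be the main obstacle is the sign- and twist-bookkeeping in identifying $\partial_\ell$ with $sp'^\vee$ (equivalently, with the dual of $sp$) through the chain of purity isomorphisms $Ri^!\Ql=\Ql(-1)[-2]$, $Rf^!=Rf_s^!Ri^!$, and Poincaré duality $Rf_\eta^!\Ql=\Ql(d)[2d]$, $Rf_s^!\Ql=\Ql(d)[2d]$ on the two fibres of equal dimension $d$. The remark in the excerpt about the ``strange numbering'' of the homological boundary map is a warning that these shifts do not line up in the naive way, and one has to be careful that the Tate twist $\Ql(1-m)$ on the special fibre is the correct one and that the residual sign (the proposition only claims commutativity up to sign) is tracked consistently with the sign already present in \eqref{eq:compat-bdy} and with the sign inherent in the connecting map of the localisation triangle. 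Once that identification is pinned down, the rest of the diagram chase --- invoking the functoriality of the Hochschild--Serre spectral sequence in homology with respect to the specialisation map $Y \to X \leftarrow U$ and the compatibility of $e_1$ with the edge map for the closed fibre --- is routine, so I would phrase the final argument as: commutativity of the top square is \eqref{eq:compat-bdy}; commutativity of the middle square reduces to the cited Grothendieck--Verdier duality identity defining $sp'$; and commutativity of the bottom square is the standard identification of inertia cohomology in degree $1$ with coinvariants.
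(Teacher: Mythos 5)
Your first reduction is the same as the paper's: the compatibility \eqref{eq:compat-bdy} removes the $K'$-theory and leaves a purely cohomological statement, and the bottom horizontal arrow is indeed just the identification $H^1(I,V)\simeq V_I(-1)$. The gap is in what you call the ``middle square''. You assert that identifying the localisation boundary $\partial_\ell$, restricted over $\bar s$, with $sp^\vee$ is ``a formal consequence of the compatibility of duality with the localisation triangle'', with only sign and twist bookkeeping left to do. This is exactly the part of the proposition that is \emph{not} formal, and it is where the paper invests all of its effort (Proposition \ref{prop:compat3}). The point you have not addressed is how the Hochschild--Serre edge homomorphism $e_1$ enters: the boundary of the localisation triangle lands in $H^2_s(S,K)$, which has a priori nothing to do with the Galois-cohomological filtration on $H^1(\eta,K_\eta)$, and one must prove that on $\Fil^1$ its pairing against an invariant class in the dual object is computed by the cup product on $H^1(I,\cH^0(K_{\bar\eta}))\otimes \cH^0(L_{\bar\eta})^I$ followed by the coinvariants/Kummer identification, pulled back through $sp$. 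Your plan treats the left-hand column as mere ``pure Galois cohomology'' identifications and never connects it to $\partial$.

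Concretely, the paper proves this by pairing the whole diagram with $H^0(S,L)$ (for $K=Rf_*Rf^!\Ql$, $L=Rf_!\Ql$ over the strictly Henselised trait) and then checking a seven-part diagram whose commutativity rests on specific, non-formal inputs: the compatibility of cup products with the boundary map and with the edge homomorphisms of the relevant spectral sequences (\cite[0.1, 0.4]{SH}), and the SGA~4$\frac12$ ``Cycle'' compatibility identifying the composite $H^1(\eta,\Ql(1))\to H^2_s(S,\Ql(1))\simeq\Ql$ with the Kummer-theoretic valuation map. None of these appear in your outline, and without them (or substitutes) the key square is simply asserted, not proved. So the main obstacle is not the sign and twist bookkeeping you flag, but producing the argument that makes the boundary map, the $\Fil^1$/edge-map structure, and Grothendieck--Verdier duality interact at all; to complete your proof you would need to formulate and verify something equivalent to Proposition \ref{prop:compat3}.
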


The compatibility of boundary maps \eqref{eq:compat-bdy} means that we
can get rid of the $K'$-theory and express \ref{prop:compat2} as a
purely cohomological compatibility. We shall state and prove this in
the next section.

\section{$\ell$-adic compatibility}

Since the target space in the diagram is the homology
$H_{q-2m+1}(\Ybar,\Ql(1-m))$ of the geometric special fibre, we may
replace $S$ by its strict Henselisation. Then we can remove the
twists, and Proposition \ref{prop:compat2} will follow from
the commutativity of the following diagram, for any $r\in\Z$:
\[
\xymat{
  {}\Fil^1H^{r+1}(U,Rf_\eta^!\La) \ar[r]^{e_1} \har[d]
  & H^1(I,H^r(\Ubar,Rf_{\bar\eta}^!\La)) \ar[d]^{=} 
  \\
  H^{r+1}(U,Rf_\eta^!\La) \ar[d]^{\partial}
  & H^r(\Ubar, Rf_{\bar\eta}^!\La)_I(-1) \ar[d]^{=}
  \\
  H^{r+2}(Y,Rg^!Rf^!\La) \ar[d]^{=}
  & \left[ H^{-r}_c(\Ubar, \La^\vee)^I(1)\right]^\vee \ar[d]^{sp^\vee}
  \\
  H^r(Y,Rf_s^!\La(-1)) \ar[r]^{=} 
  & \left[ H_c^{-r}(Y,\La^\vee)(1)\right]^\vee
}
\]
We may push this down onto $S$, where it becomes the case 
$K=Rf_*Rf^!\Ql$, $L=Rf_!\Ql$ of the following statement.

\begin{prop}
\label{prop:compat3}
Let $S$ be a strictly Henselian trait, with generic and closed points
$\eta$, $s$, whose residue characteristic is different from $\ell$.
Let $K$, $L\in \cD_c^+(S,\Ql)$ together with a pairing $K\otimes L \to
\Ql(1)$, inducing a cohomological pairing
\[
\beta\colon H^2_s(S,K) \otimes H^0(s,L_s) \to H^2_s(S,\Ql(1))=\Ql
\]
Then the following diagram is commutative up to sign:
\[
\xymat{ 
  {}\Fil^1H^1(\eta,K_\eta) \ar[r]^{e_1} \har[d]
  & H^1(\eta, \cH^0(K_\eta)) \ar[d]^{=} 
  \\
  H^1(\eta,K_\eta) \ar[dd]^{\partial}
  & {}\cH^0(K_{\bar\eta})_I(-1) \ar[d]^{\beta}
  \\
  & {}\left[ \cH^0(L_{\bar\eta})^I \right]^\vee \ar[d]^{sp^\vee}
  \\
  H^2_s(S,K) \ar[r]^{\beta}
  & H^0(s,L_s)^{\vee}
}
\]
\end{prop}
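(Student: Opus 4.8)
The plan is to reduce the statement to a diagram living entirely over $S$ in the derived category $\cD^+_c(S,\Ql)$, and then to identify the two composite maps through a careful unwinding of the local-cohomology triangle on the trait $S$. The key point is that everything in sight — the edge map $e_1$, the boundary $\partial$, the specialisation $sp$ — is built from the same three functorial gadgets: the adjunction triangle $i_*Ri^! \to \mathrm{id} \to Rj_*j^*$ for the closed/open decomposition $s\hookrightarrow S\hookleftarrow \eta$, the purity isomorphism $Ri^!\Ql \simeq \Ql(-1)[-2]$, and Grothendieck–Verdier duality exchanging $K$ and (a shift/twist of) $L$ via the given pairing $K\otimes L\to\Ql(1)$. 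So the first step is to rewrite both legs of the square as maps $H^1(\eta,K_\eta) \to H^0(s,L_s)^\vee$ (after restricting to the subspace $\Fil^1$ where the right-hand column is defined) purely in terms of these three inputs.

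First I would make explicit the right-hand column. Over the strictly Henselian trait we have $H^1(\eta,K_\eta) = H^1(I, K_{\bar\eta})$, and the Hochschild–Serre/edge map $e_1$ on $\Fil^1$ is the connecting map for the filtration, landing in $H^1(I,\cH^0(K_{\bar\eta})) = \cH^0(K_{\bar\eta})_I(-1)$ (using that $I \cong \Zl(1)$ has cohomological dimension $1$, so $H^1(I,-) = (-)_I(-1)$). Applying the pairing $\beta$ and then $sp^\vee$ gives the right-hand composite as an element of $\big[\cH^0(L_{\bar\eta})^I\big]^\vee \to H^0(s,L_s)^\vee$, where the last arrow is dual to the specialisation $H^0(s,L_s) = \cH^0(L_s) \to \cH^0(L_{\bar\eta})^I$ coming from proper (or rather, arbitrary) base change $i^*Rj_*$. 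Next I would make explicit the left-hand column: $\partial\colon H^1(\eta,K_\eta) \to H^2_s(S,K)$ is the connecting homomorphism of the triangle $i_*Ri^!K \to K \to Rj_*K_\eta \xrightarrow{+1}$, and the bottom $\beta\colon H^2_s(S,K) \to H^0(s,L_s)^\vee$ is the cup product with the pairing landing in $H^2_s(S,\Ql(1)) = \Ql$, which is exactly the perfect local-duality pairing on the trait.

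The heart of the argument is then a compatibility between \emph{two} descriptions of $H^2_s(S,K)$: on one hand as the target of $\partial$ from $H^1(\eta,K_\eta)$, on the other via the distinguished triangle $i_*i^*Rj_*K_\eta[-1] \to i_*Ri^!K \to \ldots$ which, after applying $Ri^!$ and purity, expresses $H^2_s(S,K)$ in terms of $\cH^0(K_{\bar\eta})_I(-1)$ and $\cH^{-1}(Ri^!K)$-type terms. Concretely one has an exact sequence $\cdots \to H^1(s,i^*Rj_*K_\eta) \to H^2_s(S,K) \to H^2(s,i^*Ri^!K) \to \cdots$ and the image of $\partial$ factors through the first term, which by base change and $I \cong \Zl(1)$ is $\cH^0(K_{\bar\eta})_I(-1)$; I must check that the resulting map $H^1(\eta,K_\eta) \to \cH^0(K_{\bar\eta})_I(-1)$ agrees (up to sign) with $e_1$, and that under $\beta$ this term of $H^2_s(S,K)$ pairs with $H^0(s,L_s)$ exactly via the composite $sp^\vee \circ \beta$ on the right. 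Both of these are sign-bookkeeping statements about the interaction of the connecting maps of two triangles with a duality pairing — the kind of thing that is true essentially by construction but genuinely delicate to pin down.

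I expect the main obstacle to be precisely this sign/compatibility check: verifying that the connecting map for the weight/abutment filtration (the $e_1$ leg) and the connecting map for the $Ri^!$-triangle (the $\partial$ leg), when fed through Grothendieck–Verdier duality, differ only by a controllable sign. The cleanest way to organise it is to work at the level of the triangles themselves before taking cohomology: exhibit a single morphism of distinguished triangles in $\cD^+_c(s,\Ql)$ relating $(i^*Rj_*K_\eta, i^*K, \ldots)$ to the Verdier dual of $(i^*Rj_*L_\eta, \ldots)$ induced by the pairing, and then both squares in the proposition become the two ``ends'' of this morphism of triangles, automatically commutative up to the sign inherent in the rotation of a triangle. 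Once that structural statement is in place, the case $K = Rf_*Rf^!\Ql$, $L = Rf_!\Ql$ needed in Section 4 follows by specialising the pairing to the evaluation $Rf_*Rf^!\Ql \otimes Rf_!\Ql \to Rf_!Rf^!\Ql \to \Ql(1)$ (the last map being the trace), and Proposition \ref{prop:compat2}, hence \ref{prop:compat1}, follows by pushing back up to $X$ and reinstating the twists and the identifications of $\ell$-adic homology with cohomology of $Rf^!$ recorded in Section 3.
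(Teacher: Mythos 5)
Your reduction of the statement to a single pairing identity over the trait --- namely that for $x\in\Fil^1H^1(\eta,K_\eta)$ and $y\in H^0(S,L)=H^0(s,L_s)$ one has $\beta(\partial x,y)=\pm\langle e_1x, sp(y)\rangle$, with both sides built from the triangle $i_*Ri^!\to\mathrm{id}\to Rj_*j^*$, purity and the pairing $K\otimes L\to\Ql(1)$ --- is exactly the right formulation, and is essentially how the paper proceeds (it pairs the whole diagram with $H^0(S,L)$ and proves the resulting scalar-valued diagram commutes). But your proposal stops precisely where the proof has to begin: you declare the remaining identifications to be ``sign-bookkeeping\dots true essentially by construction'', and the organising device you offer --- a single morphism of distinguished triangles from $(i^*Rj_*K_\eta,i^*K,\dots)$ to the Verdier dual of the corresponding triangle for $L$, with the two squares appearing as its ``two ends'' and commutativity coming for free from rotation signs --- is neither constructed nor sufficient as stated.

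Concretely, two non-formal inputs are missing, and the triangle-rotation argument does not supply them. First, the multiplicativity statements: that the boundary $\partial$ and the edge map $e_1$ (an abutment-filtration edge of the Hochschild--Serre/hypercohomology spectral sequence, not itself a connecting map of your localisation triangle) are compatible with the cup products induced by $K\otimes L\to\Ql(1)$, so that the whole computation can be pushed into coefficients $\Ql(1)$; the paper has to invoke \cite[0.1, 0.4]{SH} for exactly this. Second, and more seriously, the normalisation at the end: once reduced to $A(1)$-coefficients, one must identify the composite $H^1(\eta,A(1))\xrightarrow{\partial}H^2_s(S,A(1))\xrightarrow{cl(s)^{-1}}A$ with (minus) the Kummer isomorphism $H^1(\eta,A(1))\simeq A$. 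The identification $H^2_s(S,\Ql(1))\simeq\Ql$ via the cycle class of the closed point is an extra normalisation external to the six-functor/triangle formalism, so no amount of rotating triangles will produce this comparison; it is a genuine compatibility (the paper cites \cite[``Cycle'', 2.1.3]{SGA4.5}). Until you either prove these two points or locate precise references for them, the proposed argument is a plausible plan rather than a proof, since the proposition is nothing but the conjunction of these compatibilities.
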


\begin{proof}
  We can check this by pairing the whole diagram with $H^0(S,L)$, and
  are therefore reduced to the commutativity of the diagram:
  \[
  \xymat{
    {}\Fil^1H^1(\eta,K_\eta) \otimes H^0(S,L) \ar[rr]^{e_1\otimes id} \har[d]
    && H^1(\eta, \cH^0(K_\eta))\otimes H^0(S,L) \ar[d]
    \\
    H^1(\eta,K_\eta) \otimes H^0(S,L) \ar[d]^{\partial\otimes id}
    && {}\cH^0(K_{\bar\eta})_I \otimes\cH^0(L_{\bar\eta})^I (-1) \ar[d]_{\beta}
    \\
    H^2_s(S,K) \otimes H^0(S,L) \ar[rr]^{\beta}
    && H^2_s(S,\Ql(1))=\Ql 
  }
  \]
  To prove this we enlarge it to the enormous diagram below:
  \begin{landscape}\[
    \xymat{
      {}\Fil^1H^1(\eta,K_\eta) \otimes H^0(S,L)
      \har[rr] \ar[d]_{id\otimes j^*}
      && H^1(\eta,K_\eta) \otimes H^0(S,L) 
      \ar[ddd] \ar[rd]^{\partial\otimes i^*}
      \\
      {}\Fil^1H^1(\eta,K_\eta) \otimes H^0(\eta,L_\eta)
      \ar[d]_{e_1\otimes e_0} \ar[rdd]^{\cup}
      \ar@{}[rrd]|-{\mlb1}
      && \ar@{}[dr]|-{\mlb2}
      &  H^2(s,Ri^!K) \otimes H^0(s,i^*L) \ar[d]_{\cup}
      \\
      H^1(\eta,\cH^0(K_\eta))\otimes H^0(\eta,\cH^0(L_\eta))
      \ar[d]_{\cup} \ar@{}[dr]|-{\mlb3}
      &&& H^2(s,Ri^!(K\otimes L)) \ar[dd]_{\beta}
      \\
      H^1(\eta,\cH^0(K_\eta)\otimes \cH^0(L_\eta)) \ar[d]_{\cup}
      & {}\Fil^1H^1(\eta,K_\eta\otimes L_\eta)
      \har[r] \ar[ld]_{e_1} \ar[d]_{\beta} \ar@{}[rd]|-{\mlb5}
      & H^1(\eta,K_\eta\otimes L_\eta)
      \ar[ur]^{\partial} \ar[d]_{\beta} &
      \\
      H^1(\eta, \cH^0(K_\eta\otimes L_\eta))
      \ar[d]_{\beta} \ar@{}[r]|-{\mlb4}
      & {}\Fil^1H^1(\eta,A(1))
      \eqar[r] \eqar[dl] \ar@{}[dr]|-{\mlb7}
      & H^1(\eta,A(1)) \ar[r]^{\partial} \ar@{}[ru]|-{\mlb6}
      & H^2(s,Ri^!A(1)) \ar[dl]^{cl(s)^{-1}}_{\simeq}
      \\
      H^1(\eta,A(1)) \ar[rr]^{\simeq}_{\text{Kummer}} && A
    }
    \]
  \end{landscape}
  \noindent
  The commutativity of the various parts of this diagram are as follows:
  
  Parts (1), (4) and (5) obviously commute, and (6) commutes by
  functoriality.
  
  Part (2) commutes up to sign by \cite[0.1]{SH}, and part
  (3) commutes by \cite[0.4]{SH}. The remaining compatibility is (7),
  which is anti-commutative by \cite[``Cycle'', 2.1.3]{SGA4.5}.
\end{proof}

\vspace{6mm}\noindent
   Department of Pure Mathematics and Mathematical Statistics\\
   Centre for Mathematical Sciences\\
   Wilberforce Road\\
   Cambridge\enspace CB3 0WB\\
   \texttt{a.j.scholl@dpmms.cam.ac.uk}%


\begin{thebibliography}{99}
\bibitem{Be1} 
  A. A. Beilinson:
  \emph{Higher regulators and values of $L$-functions}.
  J. Soviet Math. \textbf{30} (1985), 2036--2070

\bibitem{BeH}
  --- :
  \emph{Height pairings between algebraic cycles}.
  Current trends in arithmetical algebraic geometry, ed. K. Ribet,
  1--24, Contemp. Math. \textbf{67}, Amer. Math. Soc., Providence, RI,
  1985.
 
\bibitem{Be2}
  --- :
  \emph{Notes on absolute Hodge cohomology}.
  Applications of algebraic $K$-theory to algebraic geometry and number
  theory, Parts I, II (Boulder, Colo., 1983), 35-68, Contemp. Math., 55,
  Amer. Math. Soc., Providence, RI, 1986.

\bibitem{BK}
  S. Bloch, K. Kato:
  \emph{$L$-functions and Tamagawa numbers of motives}.
  The Grothendieck Festschrift, vol. I, 333--400, Progress in
  Mathematics \textbf{86}, Birkh\"auser, 1990

\bibitem{DeFM}
  P. Deligne:
  \emph{Formes modulaires et repr\'esentations $l$-adiques}.
  S\'em. Bourbaki \textbf{11} (1968-1969), Exp. 355


\bibitem{DeW2}
  --- :
  \emph{La conjecture de Weil II}.
  Publ. math. IHES \textbf{52} (1980), 313--428

\bibitem{Gi}
  H. Gillet:
  \emph{Riemann-Roch theorems for higher algebraic $K$-theory}.
  Advances in Math. \textbf{40} (1981), 203--289

\bibitem{JC}
  U. Jannsen:
  \emph{Continuous \'etale cohomology}.
  Math. Annalen \textbf{280} (1988), 207--245

\bibitem{JMM}
  --- :
  \emph{Mixed motives and algebraic $K$-theory}.
  Lecture notes math. \textbf{1400}, Springer-Verlag, 1990

\bibitem{DJ}
  A. J. de~Jong:
  \emph{Smoothness, semi-stability and alterations}.
  Publ. Math. IHES \textbf{83} (1996), 51--93

\bibitem{Lev}
  M. Levine:
  \emph{Techniques of localization in the theory of algebraic cycles}.
  J. Alg. Geom. \textbf{10} (2001), 299-363

\bibitem{Nek}
  J. Nekov\'a\v{r}:
  \emph{Syntomic cohomology and $p$-adic regulators}.
  Preprint (1998), 55pp

\bibitem{Niz}
  W. Nizio{\l}:
  \emph{On the image of $p$-adic regulators}.
  Invent. Math. \textbf{127} (1997), 375--400

\bibitem{RSS}
  M. Rapoport, N. Schappacher, P. Schneider (eds.):
  \emph{Beilinson's conjectures on special values of $L$-functions}.
  Perspectives in Mathematics, 4. Academic Press, Inc., Boston, MA, 1988

\bibitem{RZ}
  M. Rapoport, T. Zink:
  \emph{\"Uber die lokale Zetafunktion von Shimuravariet\"aten.
    Monodromiefiltration und verschwindende Zyklen in ungleicher
    Charakteristik}. 
  Inventiones math. \textbf{68} (1982), 21--101

\bibitem{SMM}
  A. J. Scholl:
  \emph{Motives for modular forms}.
  Inventiones math. \textbf{100} (1990), 419--430

\bibitem{SH}
  --- :
  \emph{Height pairings and special values of $L$-functions}.
  In: Motives, Seattle 1991, ed.~U.~Jannsen, S.~Kleiman, J-P.~Serre. 
  Proc Symp. Pure Math \textbf{55} (1994), part 1, 571--598

\bibitem{SI}
  --- :
  \emph{Integral elements in $K$-theory and products of modular
    curves}.
  The arithmetic and geometry of algebraic cycles (Banff, AB, 1998),
  467-489, NATO Sci. Ser. C Math. Phys. Sci., \textbf{548}, 2000  

\bibitem{SoK}
  C. Soul\'e:
  \emph{Groupes de Chow et $K$-th\'eorie de vari\'et\'es sur un corps fini}
  Math. Annalen \textbf{268} (1984), 317--345

\bibitem{SoOp}
  --- :
  \emph{Op\'erations en $K$-th\'eorie alg\'ebrique}.
  Canad.~J.~Math. \textbf{37} (1985), 488--550

\bibitem{Tamme}
  G. Tamme:
  \emph{The theorem of Riemann-Roch}.
  In: Beilinson's conjectures on special values of $L$-functions,
  ed.~M.~Rapoport, N.~Schappacher, P.~Schneider (Academic Press, 1988),
  103--168

\bibitem{SGA4.5}
  SGA4$\frac12$: \emph{Cohomologie \'etale}. Lecture notes math. \textbf{569}

\end{thebibliography}
\end{document}